\DeclareMathOperator{\diam}{diam }
\DeclareMathOperator{\dist}{dist }
\DeclareMathOperator{\loc}{loc }
\DeclareMathOperator{\SO}{SO }
\DeclareMathOperator{\aut}{Aut }
\DeclareMathOperator{\sph}{\hat{\mathbb{C}}}
\DeclareMathOperator{\haus}{Haus }
\newtheorem{theorem}{Theorem}[section]
\newtheorem{corollary}[theorem]{Corollary}
\newtheorem{lemma}[theorem]{Lemma}
\theoremstyle{remark}
\newtheorem*{remark}{Remark}
\numberwithin{equation}{section}
\title{On meromorphic functions whose \\ image has finite spherical area}
\author{Oleg Ivrii}
\begin{document}

\maketitle

\begin{abstract}
In this paper, we study meromorphic functions on a domain $\Omega \subset \mathbb{C}$ whose image has finite spherical area, counted with multiplicity.
The paper is composed of two parts. In the first part, we show that the limit of a sequence of meromorphic functions is naturally defined on $\Omega$ union a tree of spheres. In the second part, we show that a set $E \subset \Omega$ is removable if and only if it is negligible for extremal distance.
\end{abstract}

\section{Introduction}

Let $\Omega \subset \mathbb{C}$ be a domain in the complex plane. A holomorphic function on $\Omega$ belongs to the classical Dirichlet space $\mathcal D = \mathcal D(\Omega)$ if the Euclidean area of its image counted with multiplicity is finite:
\begin{equation}
\label{eq:def-dirichlet}
\mathscr A_{\mathbb{C}}(F, \Omega) = \int_\Omega |F'(z)|^2 \, |dz|^2 < \infty.
\end{equation}
In this paper, we present two mostly independent vignettes on the ``spherical Dirichlet space'' $\mathcal F = \mathcal F(\Omega)$ which consists of meromorphic functions on $\Omega$ whose images have finite spherical area, i.e.~
\begin{equation}
\label{eq:spherical-area-bound}
\mathscr A_{\sph}(F, \Omega) = \int_\Omega \biggl ( \frac{2|F'(z)|}{1 + |F(z)|^2} \biggr )^2 |dz|^2 < \infty.
\end{equation}

\subsection{Bubbling of meromorphic functions}

Suppose that $\{F_n\}$ is a sequence of meromorphic functions on a domain $\Omega \subset \mathbb{C}$ for which 
\begin{equation}
\label{eq:spherical-area-bound}
\mathscr A_{\sph}(F_n, \Omega) = \int_\Omega \biggl ( \frac{2|F'_n(z)|}{1 + |F_n(z)|^2} \biggr )^2 |dz|^2 \le C.
\end{equation}
If $C < 4\pi$, then $\{F_n\}$ is a normal family since the image of each $F_n$ misses a positive area subset of the sphere. In general, $\{F_n\}$ is 
only {\em quasinormal}  in the sense that a subsequence  converges locally uniformly in $\Omega \setminus S$, where $S$ is a finite set. 

We assume that no point $p \in S$ is redundant, i.e.~that $\{F_n\}$ is not normal in any neighbourhood of $p$. This implies that for any $p \in S$ and $r > 0$, 
$$\mathscr A_{\sph} (F_n, B(p,r)) \ge 4\pi,$$
for all $n$ sufficiently large. In particular, the cardinality of $S$ is at most $\lfloor C/4\pi \rfloor$. 
We  pass to a further subsequence so that the measures
$$
\mu_n = F_n^*dA_{\sph} = \biggl ( \frac{2|F'_n(z)|}{1 + |F_n(z)|^2} \biggr )^2 |dz|^2
$$ 
converge weakly to a measure $\mu$.
The limiting measure $\mu$ may have point masses at the points of $S$. In \cite{GKR}, Grahl, Kraus and Roth  observed that $\mu(\{p\}) \ge 4\pi$ for any $p \in S$, and 
suggested that mass is quantized: $$\mu(\{p\}) = 4\pi D, \qquad \text{for some integer }D \ge 1.$$
It is intuitively clear that a part of the structure is lost when one takes the naive pointwise limit $F: \Omega \to \sph$.
After reading their manuscript, the author showed the following theorem which describes the ``full limit'' of the meromorphic functions $F_n: \Omega \to \sph$\,:

\begin{theorem}
\label{bubbling-thm}
After passing to a subsequence, the full limit of the $F_n$ is naturally a meromorphic function defined on a multi-nodal surface $X$ obtained by gluing trees of spheres to $\Omega$ at points of $S$. The full limit consists of:
\begin{itemize}
\item A meromorphic function $F: \Omega \to \sph$,  obtained as the pointwise limit of the functions $F_n: \Omega \to \mathbb{C}$ on $\Omega \setminus S$.
\item Each sphere $\Sigma_k \subset X$, $k = 1, 2, \dots, N$, comes equipped with a rational function $\mathcal R_k: \Sigma_k \to \hat{\mathbb{C}}$ of degree $D_k \ge 1$. These rational functions are obtained as appropriate rescaling limits of the $F_n$ and are determined uniquely up to pre-composition with affine maps $z \to az + b$ in $\aut \mathbb{C}$.
\end{itemize}
The limiting mass is $\mu(\{p\}) = 4\pi D$, where $D = \sum D_k$ is the sum of the degrees of the rational maps associated to spheres contained in trees attached at $p$.
\end{theorem}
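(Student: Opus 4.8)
The plan is to run the bubble-tree analysis of Sacks--Uhlenbeck (equivalently, Gromov compactness for the holomorphic graphs $z \mapsto (z,F_n(z)) \subset \Omega \times \sph$), adapted to the one-dimensional complex-analytic setting. Since $S$ is finite and consists of isolated points, it suffices to analyse $\{F_n\}$ near a single $p \in S$; after translation take $p = 0$, fix a small disk $B = B(0, r_0)$ on which $F_n \to F$ uniformly near $\partial B$, and set $m = \mu(\{0\}) \ge 4\pi$. Write $F^{\#} = 2|F'|/(1 + |F|^{2})$, so that $\mathscr A_{\sph}(F, U) = \int_{U} (F^{\#})^{2}\, |dz|^{2}$. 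Two analytic inputs drive the argument. The first is an $\varepsilon$-regularity estimate: there are $\varepsilon_0 \in (0, 4\pi]$ and $C_0 > 0$ such that if $G$ is meromorphic on $B(z_0, 2\rho)$ with $\mathscr A_{\sph}(G, B(z_0, 2\rho)) < \varepsilon_0$, then $\sup_{B(z_0, \rho)} G^{\#} \le (C_0/\rho)\, \mathscr A_{\sph}(G, B(z_0, 2\rho))^{1/2}$; one gets this from the fact that spherical area below $4\pi$ forces the image to omit a set of positive area, which together with a Zalcman rescaling and the rationality lemma below (a non-constant rational map has spherical area at least $4\pi$) yields normality, while a scaling/compactness argument upgrades equicontinuity to the stated quantitative bound. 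The second input is the Zalcman lemma: if $\{G_n\}$ is not normal at a point $z_0$, then after passing to a subsequence there are $z_n \to z_0$ and $\rho_n \to 0^{+}$ so that $G_n(z_n + \rho_n \zeta)$ converges locally uniformly on $\mathbb{C}$ to a non-constant meromorphic $g$ with $g^{\#} \le g^{\#}(0) = 1$, and by lower semicontinuity of area under locally uniform convergence, $\mathscr A_{\sph}(g, \mathbb{C}) \le \liminf_n \mathscr A_{\sph}(F_n, \Omega) \le C$.

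I would next isolate the rationality lemma: a meromorphic $g : \mathbb{C} \to \sph$ with $\mathscr A_{\sph}(g, \mathbb{C}) < \infty$ is rational, and $\mathscr A_{\sph}(g, \mathbb{C}) = 4\pi \deg g$. The identity is the change-of-variables (area) formula $\int_{\mathbb{C}} (g^{\#})^{2}\, |dz|^{2} = \int_{\sph} \#g^{-1}(w)\, dA_{\sph}(w)$ evaluated on a rational map. Rationality follows from the Ahlfors--Shimizu form of the first main theorem, which gives $T(r, g) = \frac{1}{4\pi} \int_{1}^{r} \mathscr A_{\sph}(g, B(0, t))\, \frac{dt}{t} + O(1) = O(\log r)$; a meromorphic function on $\mathbb{C}$ of Nevanlinna characteristic $O(\log r)$ is rational.

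With these in hand, the spheres are extracted by iterated rescaling, the induction being on $\lfloor m/4\pi\rfloor$. Since $\{F_n\}$ is not normal at $0$, Zalcman produces $z_n \to 0$, $\rho_n \to 0$, and (by the rationality lemma) a non-constant rational $\mathcal R_1$ of degree $D_1 \ge 1$; choosing the rescaling so as to capture the coarsest scale at which $\{F_n\}$ concentrates near $0$, one may arrange that $G_n := F_n(z_n + \rho_n \zeta) \to \mathcal R_1$ locally uniformly on $\sph$ away from the attaching point $\infty$ and a finite set $Q \subset \mathbb{C}$ of finer concentration points of $\{G_n\}$ (finite because each carries limiting mass $\ge 4\pi$). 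Let $\Sigma_1 \cong \sph$ carry $\mathcal R_1$, glued to $\Omega$ at $p$ via $\infty \leftrightarrow p$; note that $F$ extends meromorphically across $p$, since an essential singularity there would, by Picard's theorem, give infinite spherical area nearby, and then $\mathcal R_1(\infty) = F(p)$. Applying the inductive hypothesis to $(G_n, q)$ for each $q \in Q$ --- legitimate, since $4\pi D_1$ and the masses at the points of $Q$ together account for at least $4\pi(D_1 + |Q|) \le m$ of the total, so each sub-mass has strictly smaller integer part --- attaches a finite tree of spheres to $\Sigma_1$ at $q$; running the same recursion on the region of $B$ outside $B(z_n, R\rho_n)$ attaches the remaining subtrees to $\Omega$ at $p$ (so that the ``trees of spheres'' at $p$ may genuinely be several). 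The nesting relations among the rescaling data organize all of these into the rooted forest $X$ over $\Omega$, and the process terminates because each sphere consumes at least $4\pi$ of the total mass $C$.

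The remaining point is the crux of the whole argument: the mass identity $m = 4\pi D$ with $D = \sum_k D_k$, equivalently that no spherical area is lost in the neck annuli $\{R\rho_n < |z - z_n| < \delta\}$ (and the analogous necks between a parent sphere and a child); the same estimate simultaneously forces the matching of boundary values such as $\mathcal R_1(\infty) = F(p)$. The mechanism is a neck estimate: there are $\varepsilon_0 > 0$ and $C$ so that a meromorphic function on $\{1 < |z| < L\}$ with spherical area below $\varepsilon_0$ has spherical area at most $C/\log L$ on $\{2 < |z| < L/2\}$. Granting it, one first notes the dichotomy that a dyadic annulus $\{r < |z - z_n| < 2r\}$ on which $F_n^{\#}$ exceeds $c/|z - z_n|$ somewhere already contains a bubble, while otherwise its spherical area is at most $2\pi c^{2} \log 2 < \varepsilon_0$; hence, once all spheres have been extracted, every dyadic sub-annulus of a neck has area below $\varepsilon_0$, and the neck estimate then sends the total mass on a neck of conformal modulus tending to infinity to $0$. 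The neck estimate itself I would prove by localizing with the $\varepsilon$-regularity lemma to write $F = c_t + h_t$ with $h_t$ small and holomorphic on each sub-annulus, comparing $\mathscr A_{\sph}$ with the Euclidean Dirichlet energy of $h_t$ up to the bounded conformal factor $4(1 + |c_t|^{2})^{-2}$, and using the Laurent expansion: on a long annulus the Dirichlet energy splits over Fourier modes $k \in \mathbb{Z}$, and a mode contributing non-negligibly across the whole annulus is exponentially larger near one of the two ends, where $\varepsilon$-regularity caps it, forcing its contribution on the middle annulus to be small. Finally, uniqueness of $\mathcal R_k$ up to $\aut \mathbb{C}$ is formal: the only freedom in a rescaling $(z_n, \rho_n)$ producing a non-constant limit is to replace $z_n$ by $z_n + O(\rho_n)$ and $\rho_n$ by a bounded multiple of itself, which in the coordinate $\zeta$ is precisely precomposition of $\mathcal R_k$ with $\zeta \mapsto a\zeta + b$, and every such precomposition is realized. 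The expected main obstacle is exactly this neck analysis: everything else is soft, but ruling out oscillatory (as opposed to concentrating) loss of area in long necks is where holomorphicity enters essentially.
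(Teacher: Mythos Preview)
Your proposal follows the classical Sacks--Uhlenbeck / Parker--Wolfson bubble-tree scheme ($\varepsilon$-regularity, iterated rescaling, neck estimates), which is correct and is in fact one of the existing proofs the paper cites. The paper, however, takes a deliberately different and more elementary route that avoids neck analysis entirely. It extends each $F_n|_{B(p,r)}$ to an orientation-preserving branched cover $\hat F_n:\sph\to\sph$ by capping off the exterior with a diffeomorphism, so that $\mathscr A_{\sph}(\hat F_n,\sph)=4\pi d_n$ \emph{exactly}; since the exterior areas converge, the mass drop is forced to equal $4\pi(d_n-d)$, giving quantization for free. To locate the rescalings, the paper picks a generic value $w$ and tracks the $d_n-1$ preimage disks $\mathscr B_i^{(n)}\subset B(p,r)$, uniformly round by Koebe; these sort themselves into clusters by relative diameter and separation, and each cluster yields one rational rescaling limit whose degree is the cluster size. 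Thus both ``where to rescale'' and ``no energy in necks'' are replaced by topological degree and preimage counting. Your approach buys generality (it works for pseudoholomorphic and quasiregular maps); the paper's buys elementarity and an explicit, non-inductive description of every bubble at once.

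One caution worth flagging, since the paper comments on it explicitly: Zalcman's lemma rescales at a local maximum of $F_n^{\#}$ and produces a limit with $g^{\#}\le g^{\#}(0)=1$, hence a bubble with \emph{no} further concentration points --- a leaf of the tree, not the ``coarsest scale'' you want. Your sentence ``choosing the rescaling so as to capture the coarsest scale'' is therefore doing work that Zalcman does not do; you need a separate mechanism (e.g.\ rescale so that exactly $\varepsilon_0/2$ of the energy sits in the unit ball) to find a root bubble carrying a residual concentration set $Q$. Relatedly, your neck estimate as stated takes ``total area $<\varepsilon_0$ on $\{1<|z|<L\}$'' as hypothesis, but in the application you only know each dyadic sub-annulus has area $<\varepsilon_0$; the hypothesis should be the dyadic one. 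Both points are repairable along standard Parker--Wolfson lines, but they are exactly the technicalities the paper's degree-and-preimage argument is designed to sidestep.
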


\begin{remark}
A {\em multi-nodal surface} $X$ is a topological space obtained by gluing a finite or countable collection of surfaces $\{ \mathcal S_k \}$ at a discrete set of points, so that each point in $X$ is contained in at most finitely surfaces $\mathcal S_k$. A point in $X$ is called a {\em regular point} if it is contained in exactly one surface  $\mathcal S_k$ and a {\em multi-node} if it is contained in more than one surface. (In the literature, a {\em node} refers to a multi-node that is contained in exactly two surfaces. A {\em nodal surface} is a multi-nodal surface, where each multi-node is at worst a node.)
\end{remark}


Actually, Theorem \ref{bubbling-thm} was already known in much greater generality: it was proved for pseudo-holomorphic mappings into almost K\"ahler manifolds by Parker and Wolfson \cite{PW} and for quasiregular mappings by Pankka and Souto \cite{PS}. The locally univalent case has also been obtained by Li and Shafrir \cite{LS} using PDE techniques. Nevertheless, we hope that our elementary argument will be accessible to a wider audience as it involves only elementary notions from complex analysis and topology.

One definition from topology that will feature prominently in our argument is that of an {\em orientation-preserving branched cover} $\hat{F}: \sph \to \sph$\,: a continuous self-mapping of the sphere, which is a local homeomorphism outside of a finite set of points, where it is topologically equivalent to $z \to z^n$ for some $n \ge 2$.
 If the topological degree of $\hat{F}$ is $d$, then $\hat{F}$ has $2d-2$ critical points counted with multiplicity (as is the case for a rational function of degree $d$) and every point $w \in \sph$ that is not a critical value  is covered exactly $d$ times. In particular, the spherical area of the image (counted with multiplicity) $\mathscr A_{\sph}(F, \sph) = 4\pi d$.

\subsection{Removable sets}
\label{chap:removable}

A compact set $E \subset \Omega$ is called {\em removable} for Dirichlet functions if any holomorphic function in $\mathcal D(\Omega \setminus E)$ extends to a holomorphic function in $\mathcal D(\Omega)$. Similarly, we say that a set $E \subset \Omega$ is  removable for spherical Dirichlet functions if any meromorphic function in $\mathcal F(\Omega \setminus E)$ extends to a meromorphic function in $\mathcal F(\Omega)$.

In a classical work  \cite{AB}, Ahlfors and Beurling showed that a set is removable for Dirichlet functions if and only if it is a NED (negligible for extremal distance) set. 
A set $E$ is NED if for any rectangle $\mathscr R$, the modulus of curves connecting a pair of opposite sides that avoid $E$ computes the modulus of $\mathscr R$.
NED sets have a number of other characterizations, for example, a set $E$ is NED if any conformal embedding $\mathbb{C} \setminus E \to \mathbb{C}$ is linear.

In the second half of the paper,  we show that the NED condition also characterizes removable sets for spherical Dirichlet functions:

\begin{theorem}
\label{removability-thm}
A compact set $E$ is removable for spherical Dirichlet functions if and only if it is NED.
\end{theorem}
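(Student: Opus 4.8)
The plan is to treat the two implications separately; the converse direction (a non-NED set is not removable) is short, so I begin there. I would use the classical characterization quoted above, in the form: $E$ is NED precisely when every conformal embedding of $\hat{\mathbb C}\setminus E$ into $\hat{\mathbb C}$ is the restriction of a M\"obius transformation. If $E$ is not NED, choose a conformal embedding $\psi\colon\hat{\mathbb C}\setminus E\to\hat{\mathbb C}$ which is not M\"obius. Since $\psi$ is injective, its image is an open subset of $\hat{\mathbb C}$, so $\mathscr A_{\sph}(\psi,\,\cdot\,)\le 4\pi$; in particular $\psi|_{\Omega\setminus E}\in\mathcal F(\Omega\setminus E)$ for any round disk $\Omega$ containing $E$. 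Were $E$ removable, this restriction would extend to some $\tilde\psi\in\mathcal F(\Omega)$; since $\tilde\psi$ and $\psi$ agree on the overlap $\Omega\setminus E$ and $\Omega\cup(\hat{\mathbb C}\setminus E)=\hat{\mathbb C}$, they glue to a meromorphic self-map of $\hat{\mathbb C}$, i.e.\ a rational map, injective on the non-empty open set $\hat{\mathbb C}\setminus E$. A non-constant rational map injective on an open set has degree one (it is $d$-to-one off a finite set), hence is M\"obius, contradicting the choice of $\psi$. So $E$ is not removable for $\mathcal F$.

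\textbf{NED $\Rightarrow$ removable: reduction to Ahlfors--Beurling.} Now let $E$ be NED and $F\in\mathcal F(\Omega\setminus E)$. Since NED sets have zero area, any meromorphic extension of $F$ to $\Omega$ automatically inherits the finite spherical area of $F$ (its spherical derivative equals $F^\#$ a.e.), and since removability is local and $E$ compact, it suffices to extend $F$ meromorphically across a neighbourhood of each $p\in E$; using that NED sets are totally disconnected, this neighbourhood can be taken to be a Jordan domain $U$ with $p\in U$, $\overline U\subset\Omega$ and $\overline U\cap E$ a compact NED set inside $U$. The key reduction is: \emph{suppose $F$ omits some spherical disk $D$ on $U\setminus E$.} Post-composing with a M\"obius transformation $M$ carrying $\hat{\mathbb C}\setminus D$ into a bounded region, and using that a M\"obius transformation distorts the spherical metric by a factor bounded above and below on the compact sphere — so that $(M\circ F)^\#$ is comparable to $F^\#$ — we see that $M\circ F$ still has finite spherical area, and being bounded it lies in the Euclidean Dirichlet space $\mathcal D(U\setminus E)$ (its Dirichlet integral is comparable to its spherical area). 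By the Ahlfors--Beurling theorem, $M\circ F$ extends holomorphically to $\mathcal D(U)$; applying $M^{-1}$ yields a meromorphic extension of $F$ across $\overline U\cap E$. The local extensions agree with $F$ on the dense set $\Omega\setminus E$, hence with one another, and assemble into a function $\tilde F\in\mathcal F(\Omega)$.

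\textbf{The crux: $F$ omits a spherical disk near each $p\in E$.} What remains is to show, for each $p\in E$ and each sufficiently small $U$, that $F$ misses a spherical disk on $U\setminus E$, i.e.\ that the cluster set $C(F,p)$ is not all of $\hat{\mathbb C}$. (Isolated points of $E$ are disposed of at once: finite spherical area is incompatible with an essential singularity by the big Picard theorem, so they are classical removable singularities.) This is where the NED hypothesis enters, through extremal length. The idea: by the modulus form of the NED condition, the family of loops in $U\setminus E$ winding once around $p$ has infinite modulus (as it does in $U$ without the restriction); since $F^\#\in L^2$, this forces the existence of loops $\gamma\subset U\setminus E$, arbitrarily close to $p$, with $\int_\gamma F^\#\,ds$ as small as we please, so that $F(\gamma)$ lies in a small spherical disk. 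One then argues, via the argument principle on the Jordan domain bounded by $\gamma$, that $F$ cannot take values far outside that disk inside $\gamma$ either — the contribution of the part of $E$ enclosed by $\gamma$ being itself controlled through the NED property — so that $C(F,p)$ is trapped in a small disk, returning us to the previous case.

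\textbf{Expected main obstacle.} Making this last argument rigorous is the heart of the proof. The delicate point is that an NED set, though totally disconnected and of zero area, need \emph{not} be polar (a measure-zero Cantor subset of a line is NED but has positive logarithmic capacity), so one cannot excise the portion of $E$ near $p$ with a cutoff function of small Dirichlet energy; the excision and the bookkeeping of the winding-number contributions of $E$ must be organized through the NED curve-family condition rather than through a single capacity estimate. This is exactly the step with the closest analogue in the Euclidean argument of Ahlfors--Beurling and in the quasiregular treatment of Pankka--Souto, and it is where I would expect most of the work to lie.
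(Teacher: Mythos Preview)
Your converse direction is fine and is a minor variant of the paper's: the paper reduces $\mathcal F$-removability to $\mathcal D$-removability and then invokes Ahlfors--Beurling, while you invoke the conformal-embedding characterization of NED directly. Both work.

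The forward direction, however, has a genuine gap exactly where you flag it. Your plan is to show that $F$ omits a spherical disk on $U\setminus E$ and then reduce to the Euclidean Ahlfors--Beurling theorem. But ``$F$ omits a disk near $p$'' is essentially equivalent to ``$F$ is bounded near $p$ after a M\"obius change'', which is what you are trying to prove. The argument-principle step you sketch does not close this loop: even when $F(\gamma)$ lies in a tiny disk $D'$, the count of $w$-preimages of $F$ in $\Gamma\setminus\bigcup\overline{\Gamma_i}$ equals the winding of $F(\gamma)$ about $w$ \emph{minus} the windings of the $F(\gamma_i)$ about $w$, and although each $F(\gamma_i)$ sits in a small disk, those disks may land anywhere on the sphere and move as you refine the cover. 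You cannot pin down a fixed omitted disk this way without already controlling the oscillation of $F$ near $E$. Note also that the cheaper observation ``$\mathscr E(\Gamma)<4\pi$ implies the image misses a set of positive area'' only yields an omitted \emph{point}, which after a M\"obius change makes $F$ holomorphic but not in $\mathcal D$, so Ahlfors--Beurling still does not apply.

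The paper avoids this circularity by not attempting to reduce to the Euclidean case at all. The key device is an isoperimetric inequality \emph{modulo} $4\pi\mathbb Z$: writing the spherical area form as $\omega=d\beta_q$ on a hemisphere and applying Stokes, one gets
\[
\dist\Bigl(\mathscr E\bigl(\Gamma\setminus\textstyle\bigcup\overline{\Gamma_i}\bigr),\,4\pi\mathbb Z\Bigr)\ \le\ C\Bigl(L(\gamma)^2+\sum_i L(\gamma_i)^2\Bigr).
\]
Using the Ntalampekos perturbation lemma to surround $E\cap\Gamma$ by curves $\gamma_i$ with $\sum L(\gamma_i)^2$ and $\sum\mathscr E(\Gamma_i)$ arbitrarily small, and assuming $\mathscr E(\Gamma)<2\pi$ to force the integer to be $0$, one obtains a genuine isoperimetric bound $\mathscr E(\Gamma)\lesssim L(\gamma)^2$. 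Feeding this into the differential inequality $\mathscr E(B(p,\rho))\lesssim \rho\,\tfrac{d}{d\rho}\mathscr E(B(p,\rho))$ gives power decay $\mathscr E(B(p,s))\le (s/r)^\alpha\mathscr E(B(p,r))$, hence local H\"older continuity of $F$ as a map into $\hat{\mathbb C}$; the Vodopyanov--Goldshtein theorem (continuous $W^{1,2}$ across an NED set is $W^{1,2}$) and Weyl's lemma then finish. The moral: the ``contribution of $E$'' is controlled not through winding numbers but through Stokes' theorem for the spherical area form, and the $4\pi$-ambiguity is killed by the standing small-energy hypothesis.
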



One direction is easy:

\begin{lemma}
Suppose a compact set $E \subset \Omega$ is removable for spherical Dirichlet functions. Then, it is also removable for usual Dirichlet functions and therefore NED.
\end{lemma}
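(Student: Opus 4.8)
The plan is to show that every Dirichlet function on $\Omega \setminus E$ is, in particular, a spherical Dirichlet function on $\Omega \setminus E$; by hypothesis it then extends to a meromorphic function $F$ in $\mathcal F(\Omega)$, and the real task is to upgrade this extension to a \emph{holomorphic} Dirichlet extension on $\Omega$. The containment $\mathcal D(\Omega \setminus E) \subset \mathcal F(\Omega \setminus E)$ is immediate from the pointwise inequality $\bigl(2|F'|/(1+|F|^2)\bigr)^2 \le 4|F'|^2$, so $\mathscr A_{\sph}(F,\Omega\setminus E) \le 4\,\mathscr A_{\mathbb C}(F,\Omega\setminus E) < \infty$. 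Once we know $E$ is removable for usual Dirichlet functions, the Ahlfors--Beurling theorem quoted above gives that $E$ is NED, completing the proof.

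So the heart of the matter is the following: given $F \in \mathcal D(\Omega \setminus E)$, its meromorphic extension $\tilde F \in \mathcal F(\Omega)$ is actually holomorphic and lies in $\mathcal D(\Omega)$. First I would argue that $\tilde F$ has no poles on $E$. Indeed, a removable set for spherical Dirichlet functions must have zero area (otherwise a suitable M\"obius image of a non-extendable function could be manufactured, or more simply: a single pole contributes infinite... no) — more carefully, I would note that a compact removable set has empty interior and in fact is totally disconnected with zero length in the relevant sense; but the cleanest route is: if $\tilde F$ had a pole at some $p \in E$, then near $p$ the function $1/\tilde F$ would be a bounded holomorphic function, and one examines $F$ directly. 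Actually the simplest argument: since $F$ is bounded in $L^2$-sense via its derivative but need not be bounded, I instead observe that $G := \tilde F/(\tilde F^2+1)$ or rather that the function $\arctan$-type primitive is controlled. Let me reorganize: the key point is that since $E$ is removable for $\mathcal F$, it has zero area (apply removability to locally univalent functions omitting disks), hence the pole set of $\tilde F$, being discrete in $\Omega$ and contained in $E \cup (\text{poles off } E)$, meets $E$ in at most a discrete subset of $E$; but $E$ is compact and a removable set cannot contain isolated points unless... — here I would instead use that $\tilde F$ restricted to $\Omega \setminus E$ equals $F$, which is holomorphic there, so all poles of $\tilde F$ lie in $E$, and I must rule them out.

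The main obstacle, then, is ruling out poles of $\tilde F$ on $E$ and then recovering finite \emph{Euclidean} area. For the pole question, the plan is: suppose $p \in E$ is a pole of $\tilde F$. Choose a small ball $B = B(p,r) \subset \Omega$ on whose boundary $\tilde F$ has no poles and is large; then $1/\tilde F$ is a meromorphic function on $B \setminus E$ with $\mathscr A_{\sph}(1/\tilde F, \cdot) = \mathscr A_{\sph}(\tilde F,\cdot) < \infty$ (the spherical area is M\"obius-invariant), so $1/\tilde F \in \mathcal F(B\setminus E)$, hence extends, and its extension is holomorphic near $p$ with a zero at $p$; comparing with $\tilde F$ shows consistency but gives that $\tilde F = 1/(\text{holo with zero at } p)$, which is fine — so poles per se are not contradictory, and I realize the real content is different: I must show $F$ itself, not a M\"obius image, extends holomorphically. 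The correct final step: knowing $\tilde F \in \mathcal F(\Omega)$, decompose $\Omega = U_0 \cup U_\infty$ where on $U_0$ the value $\tilde F$ stays in a disk and on $U_\infty$ the value $1/\tilde F$ stays in a disk; on each piece the spherical and Euclidean metrics are comparable, so $\tilde F \in \mathcal D(U_0)$ and $1/\tilde F \in \mathcal D(U_\infty)$. Since $F = \tilde F$ has no poles on $\Omega \setminus E$ and $E$ has zero area, the poles form a set that NED/removability forces to be empty — concretely, a pole would force $\tilde F$ to omit a neighborhood of some finite value near $p$, making $\tilde F - c$ non-vanishing there with an isolated singularity removed, and one checks the Laurent expansion has no negative part because $|\tilde F'|^2$ is locally integrable after the spherical-to-Euclidean comparison on the region where $\tilde F$ is large. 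I expect this last integrability bookkeeping near $E$ — showing that finite spherical area plus the removability of $E$ forces the negative Laurent coefficients to vanish — to be the genuinely delicate point, and I would handle it by a Cauchy-estimate argument on annuli $\{r/2 < |z-p| < r\}$ combined with the fact that $\int |\tilde F'|^2$ over the "large-value" region is finite.
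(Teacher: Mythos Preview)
Your proposal never actually closes the argument; it is a sequence of false starts and course corrections that leaves the central step unfinished. The paper's proof is much shorter than what you attempt, and the two ideas you are missing are exactly the ones that make it short.

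First, you need a clean reason why an $\mathcal F$-removable set $E$ has Lebesgue measure zero. Your parenthetical ``apply removability to locally univalent functions omitting disks'' is not an argument. The paper does this in one line: if $|E|>0$, take a nontrivial quasiconformal homeomorphism $w^\mu:\mathbb{C}\to\mathbb{C}$ with dilatation supported on $E$; it is conformal on $\Omega\setminus E$, injective (hence $\mathscr A_{\sph}\le 4\pi$), so lies in $\mathcal F(\Omega\setminus E)$, yet cannot extend meromorphically to $\Omega$ since it is not conformal there.

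Second, once $|E|=0$ the pole question you agonize over is immediate. If $\tilde F$ is the meromorphic extension of $F\in\mathcal D(\Omega\setminus E)$, then
\[
\int_\Omega |\tilde F'|^2\,|dz|^2 \;=\; \int_{\Omega\setminus E} |F'|^2\,|dz|^2 \;<\; \infty,
\]
and a meromorphic function with a pole of order $m\ge 1$ at $p$ has $|\tilde F'|^2 \asymp |z-p|^{-2m-2}$ near $p$, which is never locally integrable. So $\tilde F$ is holomorphic and lies in $\mathcal D(\Omega)$. Your decomposition $\Omega = U_0\cup U_\infty$, the spherical-to-Euclidean comparisons on large-value regions, and the proposed Cauchy estimates on annuli are all unnecessary detours; and your remark that ``a pole would force $\tilde F$ to omit a neighborhood of some finite value near $p$'' is simply false.
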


\begin{proof}
We first notice that any $\mathcal F$-removable set $E$ has 2-dimensional Lebesgue measure 0. Otherwise, one can find a non-trivial quasiconformal homeomorphism $w^\mu: \mathbb{C} \to \mathbb{C}$ whose dilatation is supported on $E$. As $w^\mu$ is injective, $\mathscr A_{\hat{\mathbb{C}}}(w^\mu, \Omega \setminus E) \le 4\pi$ and $w^\mu \in \mathcal F(\Omega \setminus E)$. Since $w^\mu$ is conformal on $\Omega \setminus E$ but not conformal on $\Omega$, it cannot extend to a meromorphic function on $\Omega$.

Suppose $F$ is a holomorphic function in $\mathcal D(\Omega \setminus E)$. If $E$ is removable for the class $\mathcal F$, then $F$ extends to a meromorphic function on $\Omega$ which satisfies
$$
 \int_\Omega |F'(z)|^2 \, |dz|^2 =  \int_{\Omega \setminus E} |F'(z)|^2 \, |dz|^2 < \infty.
$$
As the last condition is incompatible with $F$ having poles, $F$ is holomorphic on $\Omega$. Thus, $F \in \mathcal D(\Omega)$ as desired.
\end{proof}


The original argument of Ahlfors and Beurling which showed that NED sets are Dirichlet removable used a special identity involving conformal maps to slit domains, while a newer proof by Hedberg \cite{hedberg} utilized the connection between  the Dirichlet energy and the condenser capacity. Since these miraculous connections are unavailable in the spherical setting, it is unlikely that one can extend these approaches to spherical Dirichlet functions.
 Our proof is a slight improvement of the beautiful isoperimetric argument of Parker and Wolfson \cite{PW} who observed that points are removable for spherical Dirichlet functions.
 
 In the case when $F$ is defined on the complex plane, one can show that the singularity at infinity is removable using Nevanlinna theory:
 
\begin{lemma}
\label{removable-singularities}
Suppose $F: \mathbb{C} \to \hat{\mathbb{C}}$ is a meromorphic function whose image has finite spherical area, counted with multiplicity:
$$
\mathscr A_{\sph}(F, \mathbb{C}) = \int_{\mathbb{C}} \biggl ( \frac{2|F'(z)|}{1 + |F(z)|^2} \biggr )^2 |dz|^2 < \infty.
$$
Then $F$ is a rational function. If $\deg F = d$ then $\mathscr A_{\hat{\mathbb{C}}}(F, \mathbb{C}) = 4\pi d$.
\end{lemma}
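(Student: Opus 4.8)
The plan is to extract from the finite-area hypothesis a logarithmic growth bound on the Nevanlinna characteristic of $F$, and then invoke the classical fact that a meromorphic function of logarithmic growth is rational. Concretely, I would work with the Ahlfors--Shimizu characteristic: writing
$$
A(t) = \frac{1}{\pi}\int_{B(0,t)}\biggl(\frac{|F'(z)|}{1+|F(z)|^2}\biggr)^2|dz|^2 = \frac{1}{4\pi}\,\mathscr A_{\sph}\bigl(F,B(0,t)\bigr)
$$
for the mean covering number of the sphere by the image of $B(0,t)$, the characteristic is $T_0(r,F) = \int_0^r A(t)\,\frac{dt}{t}$. The hypothesis says exactly that $A(t)$ is bounded, say by $C/4\pi$ where $C = \mathscr A_{\sph}(F,\mathbb{C})$. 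After checking that $A(t) = O(t^2)$ as $t \to 0$ (the spherical derivative of $F$ is locally bounded, so $\int_0^1 A(t)\,dt/t < \infty$), splitting the integral at $t = 1$ gives $T_0(r,F) \le \frac{C}{4\pi}\log r + O(1)$.

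Since $T_0(r,F)$ and the usual Nevanlinna characteristic $T(r,F)$ differ by a bounded amount, it follows that $T(r,F) = O(\log r)$, and the standard criterion (see e.g.\ Nevanlinna's or Hayman's book) then forces $F$ to be a rational function. A clean alternative, which sidesteps Nevanlinna theory entirely, is to apply the removability of points --- the theorem of Parker and Wolfson quoted above --- to the function $w \mapsto F(1/w)$ on a punctured disc about the origin: the substitution $z = 1/w$ is conformal and hence preserves the total spherical area of the image, so $F(1/w)$ lies in the spherical Dirichlet class on the punctured disc, extends meromorphically across $0$, and therefore $F$ is a meromorphic function on all of $\sph$, i.e.\ rational.

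Once $F$ is known to be a rational map of degree $d$, it is a degree-$d$ branched cover $\sph \to \sph$, so its image covers the sphere $d$ times counted with multiplicity and $\int_{\sph} F^* dA_{\sph} = 4\pi d$; equivalently, $A(\infty) = \lim_{t\to\infty} A(t) = d$, since outside the finitely many critical values every point of $\sph$ has exactly $d$ preimages and these eventually all lie in $B(0,t)$. As $F^{-1}(\infty)$ is a finite set, deleting it changes neither integral, and hence $\mathscr A_{\sph}(F,\mathbb{C}) = \mathscr A_{\sph}(F,\sph) = 4\pi d$. The only point demanding real care (rather than a genuine obstacle) is the normalization bookkeeping that identifies the raw area integral with $T_0$, together with the behaviour of $A(t)$ as $t \to 0$ and $t \to \infty$; granting these, both the rationality statement and the area identity drop out of well-known facts.
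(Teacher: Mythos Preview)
Your proof is correct and follows essentially the same Ahlfors--Shimizu route as the paper: both arguments exploit the fact that bounded $A(t)$ forces the Nevanlinna characteristic to be $O(\log r)$. The only cosmetic difference is the last step---the paper bounds the counting function $N(r,a)$ by $T_0(r,F)+O(1)$ to see each value is taken finitely often and then invokes Picard, whereas you quote the packaged criterion ``$T(r,F)=O(\log r)\Rightarrow F$ rational'' directly; these are equivalent, since the standard proof of that criterion runs through exactly the same counting-function bound. Your alternative via point-removability (apply Parker--Wolfson to $F(1/w)$) is a genuinely different and perfectly valid shortcut, though in the paper's logical order it would amount to a forward reference to the isoperimetric machinery of the second half.
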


\begin{proof}From the Ahlfors-Shimizu interpretation of Nevanlinna theory \cite[Section 1.5]{hayman}, it is easy to see that the function $F$ can attain any value $a \in \hat{\mathbb{C}}$ only a finite number of times. One merely needs to use the fact that the Nevanlinna counting function is bounded above by the Ahlfors-Shimizu characteristic:
$$
\sum_{\substack{F(z) = a \\ |z| < r}} \log \frac{r}{|z|} \le \frac{1}{4\pi} \int_0^r \mathscr A_{\sph}(F, B(0,t)) \cdot \frac{dt}{t} + O(1), \qquad F(0) \ne a.
$$
By Picard's theorem, the singularity at infinity cannot be essential, and therefore, is at worst a pole. Hence, $F$ is a rational function.
\end{proof}
We also mention a third approach to removability due to Chen and Li \cite{CL} who used the method of moving planes to prove Lemma \ref{removable-singularities} for locally univalent functions. While their result is unable to handle holomorphic functions with critical points, it is applicable to other non-linear PDEs and works in higher dimensions.

\subsection{Rescaling limits}

Suppose $F: \Omega \to \hat{\mathbb{C}}$ is a meromorphic function. A {\em rescaling} of $F$ is a map of the form $F \circ m$ where $m$ is a M\"obius transformation. 

\begin{corollary}
\label{rescaling-corollary}
Suppose that $\{F_n\}$ is a sequence of meromorphic functions on a domain $\Omega \subset \hat{\mathbb{C}}$ satisfying (\ref{eq:spherical-area-bound}) and $\tilde F_{n} = F_n \circ m_n$ is a sequence of rescalings of $F_n: \Omega \to \hat{\mathbb{C}}$ defined on balls $B(0, R_n)$ with $R_n \to \infty$. After passing to a subsequence, the maps  $\tilde F_{n}$ converge locally uniformly to a rational function $\mathcal R$ on $\mathbb{C}$ minus a finite set of points.
\end{corollary}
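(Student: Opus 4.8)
The plan is to recognize $\{\tilde F_n\}$ as a quasinormal family on an exhaustion of $\mathbb{C}$, take a subsequential limit, and invoke Lemma \ref{removable-singularities}. The starting point is that $\mathscr{A}_{\sph}$ is invariant under precomposition with conformal maps: if $\phi$ maps a domain $U$ conformally \emph{onto} a domain $V$, then by the chain rule and change of variables $\mathscr{A}_{\sph}(G \circ \phi, U) = \mathscr{A}_{\sph}(G, V)$ for any meromorphic $G$ on $V$. Each $m_n$ is a M\"obius transformation, hence globally injective, and carries $B(0,R_n)$ onto a subdomain of $\Omega$; therefore
\[
\mathscr{A}_{\sph}(\tilde F_n, B(0,R_n)) \;=\; \mathscr{A}_{\sph}\bigl(F_n, m_n(B(0,R_n))\bigr) \;\le\; \mathscr{A}_{\sph}(F_n, \Omega) \;\le\; C .
\]
Thus $\{\tilde F_n\}$ is a sequence of meromorphic functions defined on the balls $B(0,R_n) \uparrow \mathbb{C}$, each of spherical area at most $C$.

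Next I would run the quasinormality argument from the introduction on each fixed ball. For any $R > 0$ and every $n$ with $R_n > R$, the restriction $\tilde F_n|_{B(0,R)}$ has spherical area $\le C$, so along a subsequence it converges locally uniformly on $B(0,R)$ away from a set of at most $\lfloor C/4\pi\rfloor$ points --- the points of non-normality, each of which forces at least $4\pi$ units of spherical area into every neighbourhood. Diagonalizing over $R = 1, 2, 3, \dots$ yields a single subsequence of $\{\tilde F_n\}$ converging locally uniformly on $\mathbb{C} \setminus S$ to a meromorphic function $\mathcal{R}\colon \mathbb{C} \setminus S \to \hat{\mathbb{C}}$, where $S$ is finite with $|S| \le \lfloor C/4\pi\rfloor$ (finiteness again from the $4\pi$-mass lower bound, applied on any $B(0,R)$). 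Because local uniform convergence of meromorphic functions entails convergence in $C^1$ on compact subsets, for each compact $K \subset \mathbb{C} \setminus S$ we get $\mathscr{A}_{\sph}(\mathcal{R}, K) = \lim_n \mathscr{A}_{\sph}(\tilde F_n, K) \le C$; letting $K$ exhaust $\mathbb{C} \setminus S$ and discarding the measure-zero set $S$ gives $\mathscr{A}_{\sph}(\mathcal{R}, \mathbb{C} \setminus S) \le C < \infty$.

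It remains to extend $\mathcal{R}$ across $S$. This is precisely the removability of the finite set $S$ for the class $\mathcal{F}$, which is the special case of Theorem \ref{removability-thm} for finite sets; equivalently, it is the Parker--Wolfson observation that points are removable for spherical Dirichlet functions, and it is also contained in the first bullet of Theorem \ref{bubbling-thm} applied on each ball $B(0,R)$. Hence $\mathcal{R}$ extends to a meromorphic function on all of $\mathbb{C}$ with $\mathscr{A}_{\sph}(\mathcal{R}, \mathbb{C}) \le C < \infty$, and Lemma \ref{removable-singularities} shows that this extension is a rational function. This is the desired rational limit, and $\tilde F_n \to \mathcal{R}$ locally uniformly on $\mathbb{C}$ minus the finite set $S$.

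The only step here that is not routine bookkeeping is the removability of $S$; the conformal invariance of $\mathscr{A}_{\sph}$, the diagonal extraction, and the passage of the area to the limit are all standard. I would therefore treat the removability of finite sets as the crux and simply cite Theorem \ref{bubbling-thm} (or the isoperimetric argument of \cite{PW}) for it, keeping the rest of the write-up short.
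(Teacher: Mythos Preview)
Your proposal is correct and follows the same route as the paper: conformal invariance of $\mathscr A_{\sph}$ under $m_n$, quasinormality on each $B(0,R)$, a diagonal extraction over $R=1,2,\dots$, passage of the area bound to the limit, and then Lemma~\ref{removable-singularities}. The paper compresses all of this into three sentences and in particular absorbs the removability of the finite exceptional set $S$ into the single line ``$\tilde F:\mathbb{C}\to\hat{\mathbb{C}}$''; you make that step explicit, which is fine.

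One caution on your citations for the removability of $S$: do \emph{not} invoke Theorem~\ref{bubbling-thm} here. Corollary~\ref{rescaling-corollary} is used in Section~\ref{sec:tree} to produce the rescaling limits that constitute the bubbles in Theorem~\ref{bubbling-thm}, so appealing to Theorem~\ref{bubbling-thm} for point-removability would be circular. Cite \cite{PW} directly, or simply observe that the Ahlfors--Shimizu/Picard argument in the proof of Lemma~\ref{removable-singularities} applies verbatim at each isolated puncture $p\in S$ (the counting-function bound shows every value is taken finitely often near $p$, so $p$ is at worst a pole). Citing Theorem~\ref{removability-thm} is logically permissible but is a large forward reference for a one-line fact.
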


\begin{proof}
Since the spherical areas
$$
\mathcal A_{\sph}\bigl (\tilde F_n, B(0,R)\bigr ) \, = \, \mathcal A_{\sph}\Bigl (F_n, m_n(B(0,R_n) \Bigr ) \, \le \, C,
$$
are uniformly bounded, the sequence $\tilde F_n$ is quasi-normal on any ball $B(0, R)$. After passing to a subsequence, the $\tilde F_n$ converge to a meromorphic function
$\tilde F: \mathbb{C} \to \hat{\mathbb{C}}$ satisfying
$$
\mathcal A_{\sph}\bigl (\tilde F, \mathbb{C} \bigr ) \, = \, 
\lim_{R \to \infty} \mathcal A_{\sph}\bigl (\tilde F, B(0,R) \bigr ) \, \le \,
 \lim_{R \to \infty} \Bigl \{ \liminf_{n \to \infty} \mathcal A_{\sph}\bigl (\tilde F_n, B(0,R) \bigr )  \Bigr \} \, \le \, C,
$$
 locally uniformly outside a finite set of cardinality at most $\lfloor C/4\pi \rfloor$.
From Lemma \ref{removable-singularities}, it follows that $\tilde F = \mathcal R$ is a rational function.
\end{proof}

We say that a sequence of rescalings $\tilde F_n$ is {\em trivial} if the limit $\mathcal R$ is a constant function. The rational functions $\mathcal R_k$, $k=1,2,\dots, N$ in Theorem \ref{bubbling-thm} will be obtained as rescalings of the sequence $\{F_n\}$ near points of $S$.  One difficulty in Theorem \ref{bubbling-thm} is to figure out where to rescale. This is a somewhat delicate matter: for instance, in the proof of Zalcman's lemma \cite{marshall}, one rescales at local maxima of certain functions associated with the $F_n$. This approach is guaranteed to provide at least one rescaling limit, but in general, is unable to give the full set of non-trivial rescaling limits. The approach of Li and Shafrir \cite{LS} uses a similar idea and therefore suffers from the same drawback.

\section{Mass quantization}

Since the issue of bubbling is a local matter, by shrinking the domain $\Omega$, we may assume that the set $S = \{ p \}$ consists of a single point. For simplicity of exposition, we assume that the sequence $F_n$ satisfies the following three conditions:
\begin{enumerate}
\item The limit function $F$ is not constant,
\item  $F$ is holomorphic near $p \in S$,
\item $p$ is not a critical point of $F$, i.e.~$F'(p) \ne 0$. 
\end{enumerate}
We call such sequences of meromorphic functions {\em elementary}.
In Section \ref{sec:modifications},  we will explain how to handle the case of general sequences.

\begin{lemma}[Mass quantization]
Suppose $F_n: \Omega \to \sph$ is an elementary sequence of meromorphic functions with $\mathscr A_{\sph}(F_n, \Omega) \le C$.  After passing to a subsequence, we may assume that the measures $\mu_n = F_n^*dA_{\sph}$ converge weakly to a measure $\mu$ on $\Omega$. Then,  $\mu(\{p\}) = 4\pi D$ for some integer $D \ge 1$.
\end{lemma}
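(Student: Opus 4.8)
The plan is to fix a small radius $r>0$, apply the argument principle to each $F_n$ on the disc $B(p,r)$, and read off $\mu(\{p\})$ from the outcome: the mass that ``escapes'' to $p$ turns out to be exactly $4\pi$ times the number of poles of $F_n$ that have migrated into $B(p,r)$. Concretely, I would choose $r$ small enough that $\overline{B(p,2r)}\subset\Omega$ and $F$ is holomorphic and univalent on $\overline{B(p,2r)}$ — possible since $F$ is holomorphic near $p$ with $F'(p)\neq0$ — and, shrinking $r$ if necessary, that $\mu(\partial B(p,r))=0$, which rules out only countably many values; it then suffices to prove $\mu(\{p\})=4\pi D$ for one such $r$. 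With this choice, $\gamma:=F(\partial B(p,r))$ is a Jordan curve in $\mathbb{C}$ bounding the Jordan domain $\Omega^{\mathrm{in}}:=F(B(p,r))$, and — since $F$ is univalent on $B(p,r)$ — the quantity $\alpha_r:=\mathscr A_{\sph}(F,B(p,r))$ is just the spherical area of $\Omega^{\mathrm{in}}$, so $\alpha_r\in(0,4\pi)$ and $\alpha_r\to0$ as $r\to0$.

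Next I would tame the $F_n$ near the circle $\partial B(p,r)$. On the annulus $A:=\{\,r/2\le|z-p|\le2r\,\}\subset\Omega\setminus S$ the limit $F$ is holomorphic and bounded and $F_n\to F$ uniformly, so for large $n$ the $F_n$ have no poles in a neighbourhood of $A$ and, by Cauchy's estimates, $F_n\to F$ in $C^1$ there. Since $F$ embeds $\partial B(p,r)$, it follows that for $n$ large $\gamma_n:=F_n(\partial B(p,r))$ is a $C^1$ Jordan curve in $\mathbb{C}$ converging uniformly to $\gamma$. Hence the winding numbers converge, $\operatorname{ind}(\gamma_n,w)\to\operatorname{ind}(\gamma,w)$ for every $w\notin\gamma$; by the argument principle and the univalence of $F$ on $B(p,r)$ one has $\operatorname{ind}(\gamma,\cdot)=\mathbf 1_{\Omega^{\mathrm{in}}}$, and since $\gamma_n$ is a Jordan curve its winding number equals $\mathbf 1$ on the bounded complementary component $\Omega_n^{\mathrm{in}}$ and $0$ elsewhere; in particular $\mathbf 1_{\Omega_n^{\mathrm{in}}}\to\mathbf 1_{\Omega^{\mathrm{in}}}$ pointwise off $\gamma$.

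The heart of the argument is then the area identity. Let $P_n\ge0$ be the number of poles of $F_n$ in $B(p,r)$, counted with multiplicity. For $w\notin\gamma_n$ the argument principle, applied to the meromorphic function $F_n$ on $B(p,r)$ (which has no pole on $\partial B(p,r)$), identifies the number $Z_n(w)$ of solutions of $F_n(z)=w$ in $B(p,r)$, counted with multiplicity, with $\operatorname{ind}(\gamma_n,w)+P_n=\mathbf 1_{\Omega_n^{\mathrm{in}}}(w)+P_n$. Integrating over $\sph$, on which $\gamma_n\cup\{\infty\}$ is $dA_{\sph}$-null, I obtain
$$\mu_n\bigl(B(p,r)\bigr)\;=\;\mathscr A_{\sph}\bigl(F_n,B(p,r)\bigr)\;=\;\int_{\sph}Z_n(w)\,dA_{\sph}(w)\;=\;\int_{\Omega_n^{\mathrm{in}}}dA_{\sph}\;+\;4\pi P_n.$$
By dominated convergence (the indicators are between $0$ and $1$) the first term tends to $\int_{\Omega^{\mathrm{in}}}dA_{\sph}=\alpha_r$; and since $\mu_n(B(p,r))\le C$ is bounded while, by $\mu(\partial B(p,r))=0$, it also converges to $\mu(B(p,r))$, the integers $P_n$ must converge, hence stabilise at some integer $P\ge0$. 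Therefore $\mu(B(p,r))=\alpha_r+4\pi P$.

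Finally I would conclude and identify $P$. Because $F_n\to F$ locally uniformly in the spherical metric on $\Omega\setminus\{p\}$, the densities of the $\mu_n$ converge locally uniformly there to that of $F^{*}dA_{\sph}$, so the weak limit $\mu$ coincides with $F^{*}dA_{\sph}$ on $\Omega\setminus\{p\}$; hence $\mu(B(p,r)\setminus\{p\})=\mathscr A_{\sph}(F,B(p,r))=\alpha_r$ and therefore $\mu(\{p\})=4\pi P$ — in particular $P$ does not depend on $r$. Since no point of $S$ is redundant, $\mathscr A_{\sph}(F_n,B(p,r))\ge4\pi$ for large $n$, so $\mu(\{p\})+\alpha_r\ge4\pi$; letting $r\to0$ gives $P\ge1$, and we may take $D:=P$. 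I expect the one delicate point to be exactly the area identity: recognising that the mass absorbed at $p$ is carried entirely by the poles of $F_n$ sliding into $B(p,r)$. This needs just enough control of $F_n$ on $\partial B(p,r)$ for $\gamma_n$ to be an honest Jordan curve — which simultaneously pins the main term down as the indicator $\mathbf 1_{\Omega_n^{\mathrm{in}}}$ and supplies the domination for the limit — together with getting the orientation of $\gamma_n$ right, inherited from $\gamma=F(\partial B(p,r))$ bounding a holomorphic image. The remaining pieces — the annulus estimate, the identification of $\mu$ with $F^{*}dA_{\sph}$ away from $p$, and the elementary winding-number facts — are routine.
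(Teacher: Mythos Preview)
Your argument is correct and is essentially a dual reformulation of the paper's proof. The paper extends $F_n|_{B(p,r)}$ to an orientation-preserving branched cover $\hat F_n:\sph\to\sph$ by gluing on a diffeomorphism from $\sph\setminus B(p,r)$ onto $\sph\setminus F_n(B(p,r))$, and then reads off quantization from $\mathscr A_{\sph}(\hat F_n,\sph)=4\pi d_n$, where $d_n=\deg\hat F_n$; since the areas of the glued pieces converge, the drop is $4\pi(d_n-1)$. You bypass the extension and use the argument principle directly: $Z_n(w)=\mathbf 1_{\Omega_n^{\mathrm{in}}}(w)+P_n$, then integrate over $\sph$. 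The two computations match via $d_n=1+P_n$: the paper's glued diffeomorphism contributes the ``$1$'', and the pre-images inside $B(p,r)$ contribute $P_n+\mathbf 1_{\Omega_n^{\mathrm{in}}}$, so your $P$ is exactly the paper's $D=d_n-1$. Your route is a bit more elementary (no topological extension needs to be built) and has the pleasant feature of identifying the quantized mass concretely with the poles of $F_n$ sliding into $B(p,r)$. The paper's branched-cover framing, on the other hand, generalizes more cleanly to the non-elementary cases treated in Section~\ref{sec:modifications} (a critical point of order $a$ at $p$ is handled by gluing an $(a{+}1):1$ cover at infinity) and, via Riemann--Hurwitz, makes visible the accompanying loss of $2D$ critical points, which feeds into the tree-of-spheres construction.
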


\begin{proof}
Choose $r > 0$ sufficiently small so that $F$ is univalent on $B(p,r)$ and the image $F(\overline{B(p,r)})$ is homeomorphic to a closed disk.
In particular, $B(p, r) \subset \Omega$ contains no critical points of $F$. By discarding countably many  radii,
we may assume that each map $F_n$ has no critical points on $\partial B(p,r)$, although it may have plenty of critical points inside $B(p,r)$. 

We orient the boundary $\partial B(p,r)$ counter-clockwise. By assumption $(2)$, the image   of $\partial B(p,r)$ under $F$ is a Jordan curve which winds counter-clockwise around $F(p)$. Since $F_n \to F$ uniformly on a neighbourhood of $\partial B(p,r)$, for $n$ sufficiently large, $F_n(\partial B(p,r))$ will also be a Jordan curve which winds counter-clockwise around $F(p)$.

We  extend $F|_{B(p,r)}$ to an orientation-preserving diffeomorphism $\hat F: \sph \to \sph$ by selecting an orientation-preserving diffeomorphism from $\sph \setminus B(p,r)$ onto  $\sph \setminus F(B(p,r))$ that agrees with $F$ on $\partial B(p,r)$.

In a similar way, we  extend each $F_n|_{B(p,r)}$ to an orientation-preserving branched cover $\hat F_n: \sph \to \sph$ by selecting an orientation-preserving diffeomorphism from
$\sph \setminus B(p,r)$  onto  $\sph \setminus F_n(B(p,r))$ that agrees with $F_n$ on $\partial B(p,r)$. Since the $F_n$ converge uniformly to $F$ on $\partial B(p,r)$, one can choose the extensions $\hat F_n|_{\sph \setminus B(p,r)}$  which tend to $\hat F|_{\sph \setminus B(p,r)}$ uniformly in the spherical metric.  

Let $d_n$ be the topological degree of  $\hat F_n$. Since $\hat F_n$ is a $d_n:1$ mapping of  the Riemann sphere to itself,
 $$
 \mathscr A_{\sph} \bigl (\hat F_n, {B(p,r)} \bigr )  +  \mathscr A_{\sph}\bigl (\hat F_n, {\sph \setminus B(p,r)} \bigr ) = 4\pi d_n.
 $$ 
   As $\mathscr A_{\sph}(\hat F_n, {\sph \setminus B(p,r)}) < 4\pi$, the degrees $d_n$ are uniformly bounded above.   We  pass to a subsequence for which $\{d_n\}$ is constant. 
   
Since $d = 1$ is the topological degree of $\hat F$, 
 $$
 \mathscr A_{\sph} \bigl (\hat F, {B(p,r)} \bigr  )  +  \mathscr A_{\sph}\bigl (\hat F, {\sph \setminus B(p,r)} \bigr ) = 4\pi d.
 $$ 
  It is clear that the degree of $\hat F_n$ can only drop in the limit, i.e.~$d \le d_n$, and bubbling occurs when there is a strictly inequality: $d < d_n$. 
  
  As  
$$
\mathscr A_{\sph}\bigl (\hat F_n, {\sph \setminus B(p,r)} \bigr ) \to \mathscr A_{\sph}\bigl (\hat F, {\sph \setminus B(p,r)}\bigr ),
$$
the spherical area of $F_n(B(p,r))$ can only drop by a multiple of $4\pi$, i.e.~
$$
\lim_{n \to \infty} \mathscr A_{\sph}\bigl (\hat F_n,  B(p,r) \bigr) = \mathscr A_{\sph}\bigl (\hat F,  B(p,r) \bigr ) + 4\pi D,
$$
where $D = d_n - d$.
The $4\pi D$ drop in area must also be accompanied with a drop in $2D$ critical points. 
Since the convergence $F_n \to F$ is uniform away from $p$, by Hurwitz theorem, these critical points must tend to $p$ as $n \to \infty$.
\end{proof}

\section{Tree structure}
\label{sec:tree}

In this section, we prove Theorem \ref{bubbling-thm} for elementary sequences of meromorphic functions.
Continuing the discussion from the previous section, we pass to a further subsequence so that the critical values of $\hat F_n$ converge in $\hat{\mathbb{C}}$ and denote the limiting critical value set by
$\mathscr V$. 
Pick a round ball $\mathscr B = B(w, \rho)$ so that 
$$B(w, 2\rho) \subset \mathbb{C} \setminus \bigl (\mathscr V \cup F(B(p,r)) \bigr ).$$
The pre-image $\hat F_n^{-1}(\mathscr B)$ consists of $d_n$ connected components.
Since exactly  one of these connected components is  outside $B(p,r)$, the remaining   $d_n-1$ components are compactly contained in $B(p,r)$, which we label $\mathscr B_1^{(n)}, \mathscr B_2^{(n)}, \dots, \mathscr B_{d_n-1}^{(n)}$. 

By construction, $F_n$ maps each topological disk $\mathscr B_i^{(n)}$ conformally onto
$\mathscr B$ and each Jordan curve $\gamma_i^{(n)} := \partial \mathscr B_i^{(n)}$ homeomorphically onto $\partial \mathscr B$.
Let $z_i^{(n)}$ be the pre-image of $w$ contained in $\mathscr B_i^{(n)}$ and $r_i^{(n)} := \diam \mathscr B_i^{(n)}$.
By Koebe's distortion theorem, the domains $\mathscr B_i^{(n)}$ are uniformly round: there exists constants $C_1, C_2 > 0$, independent of $n$, such that
  $$
B(z_i^{(n)}, C_1 \cdot r_i^{(n)}) \, \subset \, \mathscr B_i^{(n)} \, \subset \, B(z_i^{(n)}, C_2  \cdot r_i^{(n)}).
 $$
 
\paragraph{Clusters.} Passing to a subsequence if necessary, the curves $\gamma_i^{(n)} := \partial \mathscr B_i^{(n)}$ become organized into {\em clusters},
with $\gamma_i^{(n)}$ and $\gamma_j^{(n)}$ in the same cluster if
$$
\diam \gamma_i^{(n)} \, \asymp \, 
\dist(\gamma_i^{(n)}, \gamma_j^{(n)}) \, \asymp \, \diam \gamma_j^{(n)}, \qquad \text{as }n \to \infty.
$$
In other words, we ask that the ratios 
$$
\frac{ \dist(\gamma_i^{(n)}, \gamma_j^{(n)})}{\diam \gamma_j^{(n)}} \qquad
\text{and} \qquad
\frac{\diam \gamma_i^{(n)}}{\diam \gamma_j^{(n)}}
$$
are bounded from 0 and $\infty$.

\paragraph{Spheres.}  We let $N$ denote the number of clusters. By Corollary \ref{rescaling-corollary}, each cluster $\mathscr C_j$ has a non-constant rescaling limit:
$$
F_n(z_j^{(n)} + r_j^{(n)} z) \to \mathcal R_j(z),
$$
where the convergence is uniform on $\mathbb{C} \setminus S_j$, where $S_j$ is a finite set. Since the degree of $\mathcal R_j$ is just the size of the cluster $\mathscr C_j$, the degrees of the rational functions $\mathcal R_j$ add up to  $d_n-1 = D$.

From the construction, it is clear that the rational functions $\mathcal R_j$ are uniquely determined up to pre-composition with affine maps in $\aut \mathbb{C}$.

\begin{lemma}
\label{sum-degrees}
The degrees of the rescaling limits account for the drop in the degree:
$$
\deg \mathcal R_1 + \deg \mathcal R_2 + \dots + \deg \mathcal R_N = D.
$$
\end{lemma}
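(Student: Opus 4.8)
The plan is to show that the preimage $\hat F_n^{-1}(\mathscr B)$, which consists of $d_n - 1 = D$ components inside $B(p,r)$, distributes its components among the $N$ clusters exactly according to the degrees $\deg \mathcal R_j$, so that summing over the clusters recovers $D$. The starting point is the count already established: $\hat F_n^{-1}(\mathscr B)$ has precisely $d_n$ components, one outside $B(p,r)$ and $D = d_n - d = d_n - 1$ inside, each of which is a topological disk $\mathscr B_i^{(n)}$ mapped conformally onto $\mathscr B$. Since the clusters partition the index set $\{1, \dots, D\}$ (every $\gamma_i^{(n)}$ lands in exactly one cluster after passing to a subsequence), it suffices to prove that the number of components in the cluster $\mathscr C_j$ equals $\deg \mathcal R_j$.

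First I would fix a cluster $\mathscr C_j$ and pick a representative index, say with $z_j^{(n)}$ and scale $r_j^{(n)} := \diam \mathscr B_j^{(n)}$ as in the construction, and consider the rescaled maps $G_n(z) := F_n(z_j^{(n)} + r_j^{(n)} z)$, which converge locally uniformly on $\mathbb{C} \setminus S_j$ to the rational function $\mathcal R_j$. The key point is that under this rescaling, the components $\mathscr B_i^{(n)}$ belonging to $\mathscr C_j$ remain at bounded, non-degenerate scale and bounded mutual distance (that is precisely the defining property of a cluster, together with Koebe roundness), while components in other clusters either collapse to points or escape to infinity. Hence, passing to the limit, each $\mathscr B_i^{(n)}$ with $\gamma_i^{(n)} \in \mathscr C_j$ converges (in the Hausdorff sense) to a component of $\mathcal R_j^{-1}(\mathscr B)$ — here I use that $\mathscr B$ is a small round ball chosen away from the limiting critical value set $\mathscr V$, so that $\mathcal R_j^{-1}(\mathscr B)$ consists of $\deg \mathcal R_j$ disjoint topological disks, each mapped conformally onto $\mathscr B$, and no critical values of $\mathcal R_j$ lie in $\mathscr B$. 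Conversely, every component of $\mathcal R_j^{-1}(\mathscr B)$ arises as such a limit, by applying Hurwitz's theorem to $G_n \to \mathcal R_j$ on a slightly larger ball $B(w, 2\rho)$ over which $G_n$ is eventually a genuine covering. This gives a bijection between the components of $\hat F_n^{-1}(\mathscr B)$ lying in $\mathscr C_j$ and the $\deg \mathcal R_j$ components of $\mathcal R_j^{-1}(\mathscr B)$, so the cluster $\mathscr C_j$ has exactly $\deg \mathcal R_j$ members.

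Summing over $j = 1, \dots, N$ and using that the clusters partition all $D$ components inside $B(p,r)$ then yields
$$
\deg \mathcal R_1 + \deg \mathcal R_2 + \dots + \deg \mathcal R_N \; = \; D,
$$
as claimed. I expect the main obstacle to be the bookkeeping at the boundary between scales: one must verify that no component $\mathscr B_i^{(n)}$ is ``lost'' — for instance, that it does not straddle two clusters or converge to something on the boundary $\partial \mathscr B$ rather than to an honest component of $\mathcal R_j^{-1}(\mathscr B)$. This is controlled by the uniform roundness estimate from Koebe's theorem together with the choice of $\mathscr B$ inside the ball $B(w, 2\rho)$ that avoids $\mathscr V$; the extra room $B(w,2\rho)$ guarantees that the rescaled maps $G_n$ are eventually honest coverings over $\mathscr B$, so Hurwitz applies cleanly and the component count is stable under the limit. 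The only other subtlety is confirming that a component in cluster $\mathscr C_j$ does not accidentally survive at bounded scale after rescaling at $\mathscr C_{j'}$ for $j' \neq j$; this is immediate from the definition of clusters, since distinct clusters have incomparable scales or diverging mutual distances after rescaling.
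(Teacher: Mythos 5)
Your proof follows the same route the paper takes — the paper's entire justification for the lemma is the remark ``Since the degree of $\mathcal R_j$ is just the size of the cluster $\mathscr C_j$, the degrees of the rational functions $\mathcal R_j$ add up to $d_n - 1 = D$,'' and your argument is precisely a detailed unpacking of why $\deg \mathcal R_j = |\mathscr C_j|$, via Hurwitz's theorem and Koebe roundness applied to the rescaled family $G_n(z) = F_n(z_j^{(n)} + r_j^{(n)} z)$. So the approach and conclusion match; you are filling in a step the paper leaves to the reader.

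One subtlety that your write-up (like the paper's) glosses over, and that you may want to address explicitly: the claim that $\mathcal R_j^{-1}(\mathscr B)$ consists of exactly $\deg \mathcal R_j$ topological disks requires $\overline{\mathscr B}$ to avoid the critical values of the rational function $\mathcal R_j$, and the bijection with the cluster requires that none of these preimage components touch the exceptional set $S_j$ or $\infty$. The ball $\mathscr B$ was chosen before the bubbles $\mathcal R_j$ were known, away from the set $\mathscr V$ of accumulation points of the critical values of $\hat F_n$ and from $F(B(p,r))$. Critical values of $\mathcal R_j$ attained at points of $\mathbb{C}\setminus S_j$ do lie in $\mathscr V$ by Hurwitz, but critical values attained at $\infty$ or at points of $S_j$, as well as the regular values $\mathcal R_j(\infty)$ and $\mathcal R_j(q)$ for $q \in S_j$, are not automatically captured by this choice. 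If, say, $\mathcal R_j(\infty) \in \mathscr B$, then $\mathcal R_j^{-1}(\mathscr B)$ would contain an unbounded component that is not in Hausdorff correspondence with any $\mathscr B_i^{(n)}$ from cluster $\mathscr C_j$ (it would come from an ancestor cluster or from the single component outside $B(p,r)$), and your component count would over- or double-count. The cleanest repair is to run the Hurwitz count not over $\mathscr B$ itself but over a generic value $w \in \mathscr B$ — chosen after the fact to avoid the finite set $\mathcal R_j(S_j \cup \{\infty\}) \cup \{\text{critical values of } \mathcal R_j\}$ — and to note, via the uniform Koebe roundness, that the cluster decomposition of the preimage components is independent of which $w \in \mathscr B$ one tracks. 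With that modification your bijection argument is airtight.
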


\paragraph{Tree structure.}  In turn, the clusters are naturally organized into a tree of spheres structure. We say that the sphere $(\Sigma_i, \mathcal R_i)$ is a {\em descendant} of $(\Sigma_j, \mathcal R_j)$ if for any $\gamma_i \in \mathscr C_i$ and $\gamma_j \in \mathscr C_j$,
$$
\frac{\diam \gamma_i^{(n)}}{\diam \gamma_j^{(n)}} \to 0, \qquad 
 \dist(\gamma_i^{(n)}, \gamma_j^{(n)}) \lesssim  \diam \gamma_j^{(n)}, \qquad \text{as }n \to \infty,
$$
and write $(\Sigma_i, \mathcal R_i) \prec (\Sigma_j, \mathcal R_j)$. We view every sphere $(\Sigma_i, \mathcal R_i)$ as a descendant of $(\Omega, F)$.
 The relation $\prec$ is clearly transitive. We say that $(\Sigma_i, \mathcal R_i)$ is a {\em direct descendant} of $(\Sigma_j, \mathcal R_j)$ if there is no index $k$ for which $(\Sigma_i, \mathcal R_i) \prec (\Sigma_k, \mathcal R_k) \prec (\Sigma_j, \mathcal R_j)$.
 
\paragraph{Assembly.}  If $(\Sigma_i, \mathcal R_i)$ is a direct descendant of $(\Sigma_j, \mathcal R_j)$, then we attach $(\Sigma_i, \mathcal R_i)$  to  $(\Sigma_j, \mathcal R_j)$ by gluing the point at infinity in $\Sigma_i$ to 
$$
p_{i,j} \,=\, \lim_{n \to \infty} \frac{z_i^{(n)} - z_j^{(n)}}{r_j^{(n)}} \, \in \Sigma_j.
$$
Similarly, if $(\Sigma_i, \mathcal R_i)$ is a direct descendant of $(\Omega, F)$, then we attach $(\Sigma_i, \mathcal R_i)$ to $(\Omega, F)$ by gluing the point at infinity in $\Sigma_i$ to $p \in \Omega$.

As $\mathcal R_i(\infty) = \mathcal R_j(p_{i,j})$,   the full limit $(F, \mathcal R_1, \dots, \mathcal R_N)$ is continuous on the multi-nodal surface $X$ obtained by gluing $\Omega$ and the spheres $\Sigma_1, \Sigma_2, \dots, \Sigma_N$.
 
\section{General sequences}
\label{sec:modifications}

We now explain how to handle general sequences of meromorphic functions $F_n$.

\medskip

\noindent Q. What happens if $F$ has a pole at $p$ instead of being holomorphic?

\medskip

\noindent A. We can instead work with $1/F_n$. Under this transformation, the spherical derivative remains unchanged:

$$
\frac{2|(1/F_n)'|}{1 + |1/F_n|^2} = \frac{2|F'_n/F_n^2|}{1 + |1/F_n|^2} = \frac{2|F'_n|}{1 + |F_n|^2}.
$$

\medskip

\noindent Q. What happens if $F$ has a critical point at $p$ of order $a \ge 1$?

\medskip

\noindent A. The proof proceeds as before, but this time, we cannot extend $F_n|_{B(p,r)}$ and $F|_{B(p,r)}$ to be one-to-one on $\sph \setminus B(p,r)$.
When defining the extensions  $\hat F_n|_{B(p,r)}$ and  $\hat F|_{B(p,r)}$, we add a critical point of order $a$ at infinity. The extensions are faciliated by the following lemma:

\begin{lemma}
{\em (i)} Suppose $F: \partial B(p,r) \to \mathbb{C}$ is a $C^1$ immersed curve in the plane such that $F(p) \notin F(\partial B(p,r))$ and 
$$
\frac{d}{d\theta} \, \arg \bigl ( F(p + re^{i\theta}) - \arg F(p) \bigr ) > 0.
$$
If  $F(\partial B(p,r))$ winds $k = a + 1$ times counter-clockwise around $F(p)$, then
 $F$ extends to a $C^1$ orientation-preserving branched cover $\hat F: \hat{\mathbb{C}} \setminus \overline{B(p,r)} \to
 \hat{\mathbb{C}}$ which is $k : 1$ in a neighbourhood of infinity and has no other critical points.
 
{\em (ii)} Suppose $F$ is a $C^1$ mapping defined on the closed ball $\overline{B(p,r)}$ whose restriction to $\partial B(p,r)$ satisfies the assumptions of part {\em (i)}. Then $F$ extends to a $C^1$ branched cover $\hat F$ of the Riemann sphere.
\end{lemma}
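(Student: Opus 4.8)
The plan is to prove part (i) by exhibiting the extension explicitly in a coordinate chart centred at $\infty$, and to deduce part (ii) from part (i) together with a $C^1$ repair near $\partial B(p,r)$. After translating the target we may assume $F(p)=0$, and we write $\gamma(\theta)=F(p+re^{i\theta})=\rho(\theta)\,e^{i\phi(\theta)}$; the hypotheses in part (i) amount to saying that $\rho=|\gamma|$ is a positive $C^1$ function on $\mathbb{R}/2\pi\mathbb{Z}$ and that $\arg\gamma$ admits a $C^1$ lift $\phi$ with $\phi'>0$ and $\phi(\theta+2\pi)=\phi(\theta)+2\pi k$. I would use the holomorphic coordinate $w=r/(z-p)$ near $\infty$, which identifies $\hat{\mathbb{C}}\setminus B(p,r)$ with the closed unit disk in $w$ ($w=0$ corresponding to $\infty$, and $|z-p|=r$ to $w=e^{-i\theta}$), and define
$$
\hat F \;=\; |w|^{-k}\,\rho(-\alpha)\,e^{i\phi(-\alpha)} \qquad (w=|w|e^{i\alpha}\neq 0), \qquad \hat F(\infty)=\infty .
$$
By construction $\hat F=\gamma=F$ on $\{|w|=1\}$, so what remains is to check that $\hat F$ is $C^1$, that it is an orientation-preserving local diffeomorphism on the punctured closed disk $\{0<|w|\le 1\}$, and that near $w=0$ it is $k$-to-$1$ and topologically conjugate to $w\mapsto w^k$.

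These verifications are routine. Away from $w=0$ everything is manifestly $C^1$; at $w=0$ one passes to the target chart $\zeta=1/\hat F=|w|^{k}\rho(-\alpha)^{-1}e^{-i\phi(-\alpha)}$, which has the form $|w|^{k}g(\alpha)$ for a $C^1$ function $g$ on $\mathbb{R}/2\pi\mathbb{Z}$ --- the quasi-periodicity of $\phi$ is exactly what makes $g$ well defined --- and such a function is $C^1$ at the origin precisely because $k=a+1\ge 2$. In polar coordinates the map reads $(|w|,\alpha)\mapsto(|w|^{-k},\phi(-\alpha))$, with Jacobian $k\,|w|^{-k-1}\phi'(-\alpha)>0$; since polar coordinates are orientation-preserving on both sides, this shows $\hat F$ is an orientation-preserving local diffeomorphism on $\{0<|w|\le 1\}$, so it has no critical points there. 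Finally, reading the same data in the chart $\zeta$: as $w$ winds once positively around $0$ the argument $\arg\zeta=-\phi(-\alpha)$ increases monotonically by exactly $2\pi k$, which exhibits $\hat F$ near $\infty$ as topologically conjugate to $w\mapsto w^{k}$ --- so $\infty$ is a critical point of order $a$, it is the only one, and $\hat F$ is $k$-to-$1$ in a punctured neighbourhood of it.

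For part (ii) I would apply part (i) to $F|_{\partial B(p,r)}$ to obtain $\hat F_{\mathrm{ext}}$ on $\hat{\mathbb{C}}\setminus B(p,r)$, and observe that the naive gluing --- $F$ on $\overline{B(p,r)}$, $\hat F_{\mathrm{ext}}$ outside --- is only continuous: along $\partial B(p,r)$ the tangential derivatives agree automatically (both are proportional to $\gamma'$), but the radial derivatives generally do not. I would repair this by precomposing $\hat F_{\mathrm{ext}}$ with a diffeomorphism $\Phi$ of the closed exterior disk that equals the identity outside a thin collar of $\partial B(p,r)$ (hence fixes $\infty$ and is the identity near it) and whose $1$-jet along $\partial B(p,r)$ is the prescribed linear map $A(z_0):=D\hat F_{\mathrm{ext}}(z_0)^{-1}\circ DF(z_0)$. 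Then $\hat F:=F$ on $\overline{B(p,r)}$ and $\hat F:=\hat F_{\mathrm{ext}}\circ\Phi$ on $\hat{\mathbb{C}}\setminus B(p,r)$ have matching $1$-jets along the circle, so they glue to a globally $C^1$ map; moreover $D(\hat F_{\mathrm{ext}}\circ\Phi)$ is invertible off $\Phi^{-1}(\infty)=\infty$, so the only critical points of $\hat F$ are the order-$a$ point at $\infty$ and those of $F$ inside $B(p,r)$, i.e.\ $\hat F$ is an orientation-preserving branched cover. The diffeomorphism $\Phi$ exists because $A(z_0)$ fixes the unit tangent vector of $\partial B(p,r)$ (as $\hat F_{\mathrm{ext}}$ and $F$ both restrict to $\gamma$) and has positive determinant (both are orientation-preserving), so in a tangent/normal frame $A(z_0)$ is upper-triangular with tangential diagonal entry $1$ and positive normal diagonal entry; matrices of this form make up a contractible group, and such an admissible family of boundary $1$-jets is realised by a collar diffeomorphism in the standard way (e.g.\ as the time-one flow of a vector field vanishing on $\partial B(p,r)$ with the prescribed transverse linearisation).

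Two caveats. First, part (ii) implicitly needs $F|_{B(p,r)}$ to already be a branched cover onto its image (for instance, holomorphic), since otherwise $F$ cannot extend to a branched cover of the sphere at all, and one should also assume $DF$ is invertible along $\partial B(p,r)$ for the jet $A(z_0)$ to make sense; both hold in the intended application after discarding the finitely many radii $r$ through the critical points of (the holomorphic) $F$. Second, the only genuinely delicate point in the whole argument is the orientation bookkeeping in part (i): the chart $(|z-p|,\arg(z-p))$ on the exterior disk is orientation-\emph{reversing}, which is precisely why the \emph{positive} polar Jacobian computed above is the correct sign; once the frames are pinned down, and likewise once the collar model for $\Phi$ is fixed in part (ii), everything is elementary.
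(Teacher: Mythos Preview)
Your argument is correct and is essentially the paper's own proof fleshed out: writing $\hat F = (t/r)^k \rho(\theta)e^{i\phi(\theta)}$ for $z=p+te^{i\theta}$ is exactly the paper's prescription ``map the ray $[p+re^{i\theta},\infty)$ diffeomorphically onto the ray $[F(p+re^{i\theta}),\infty)$'' with a specific radial parametrization, and your collar-diffeomorphism repair in part~(ii) is precisely the ``little care'' the paper alludes to. One small slip: in your final caveat, the chart $(|z-p|,\arg(z-p))$ on the exterior disk is orientation-\emph{preserving}, not reversing (polar coordinates have Jacobian $r>0$ everywhere); your Jacobian computation in the $w$-chart is already correct on its own and needs no such correction.
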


\begin{proof}[Sketch of proof]
We may choose $\hat F$ so that it maps the ray $[p + re^{i\theta}, \infty)$ diffeomorphically onto the ray $[F(p + re^{i\theta}), \infty)$. In part (ii), a little care needs to be taken so that the extension $\hat F$ is $C^1$ on $\partial B(p,r)$.
\end{proof}

The assumptions of the above lemma hold when $r > 0$ is sufficiently small. 
We point out some minor differences in the computation:
\begin{itemize}
\item $\hat F$ is a rational map of degree $d = a + 1$. 
\item The degree of $\hat F_n$ drops by $D = d_n - d = d_n - (a + 1)$.
\item $B(p,r)$ contains $2D$ extra critical points of $F_n$, which disappear in the limit.
\item $a+1$ connected components of $\hat F_n^{-1}(\mathscr B)$ are outside $B(p,r)$.
\item $d_n - (a+1) = D$ connected components of $\hat F_n^{-1}(\mathscr B)$ are compactly contained in $B(p,r)$, so Lemma \ref{sum-degrees} remains true in this context.
\end{itemize}

\bigskip

\noindent Q. What happens if the limit $F$ is a constant function?

\medskip

\noindent A. Since bubbling is a local matter, we may assume that the domain $\Omega$ is bounded. We can then work with the perturbed sequence $F_{n, 1}(z) = F_n(z) + z$. Since
$$
 \frac{2|F'_n(z) + 1|}{1 + |F_n(z) + z|^2} 
  \, \asymp \,  \frac{2|F'_{n}(z)|}{1 + |F_{n}(z)|^2},
$$
when either quantity is large, the spherical areas $\mathscr A_{\sph}\bigl (F_n(z) +  z, \Omega \bigr )$ are uniformly bounded if and only if the $\mathscr A_{\sph}(F_n, \Omega)$ are. The construction in Section \ref{sec:tree} produces a tree of spheres $(\Sigma_k, \mathcal R_{k,1})$ for the perturbed sequence of meromorphic functions. The rational functions for the original sequence are simply $\mathcal R_k(z) = \mathcal R_{k, 1}(z) - p$, where $p$ is the point in $\Omega$ to which the branch of the tree containing $\Sigma_k$ is attached. (When we zoom in near $p \in S$, the function $z$ looks like a constant.)

\medskip

{\em Further remark.} In the locally univalent case, Li and Shafrir \cite{LS} observed that all bubbles are simple (correspond to rational functions of degree 1) and are attached directly to $\Omega$. In this case, the limit function has to be constant. This can be seen from the construction of the tree of spheres as the formation of non-simple bubbles and higher-order bubbles involves critical points. W.~Chen \cite{chen} has constructed sequences of meromorphic functions without critical points which feature an arbitrary number of simple bubbles.

\section{Preliminaries for removability}

We now turn to the second half of the paper, where we show that NED sets are removable for spherical Dirichlet functions.

\subsection{Notation}

Let $E \subset \Omega$ be a compact set and $F \in \mathcal F(\Omega \setminus E)$ be a spherical Dirichlet function.
For a rectifiable curve $\gamma \subset \Omega$, we denote the spherical length of its image counted with multiplicity by
$$
L(\gamma) = \int_{\gamma \setminus E} \frac{2|F'(z)|}{1+|F(z)|^2} \, |dz|.
$$
We will write $\Gamma$ for the domain enclosed by $\gamma$.
Similarly, for an open set $U \subset \Omega$, we write 
$$
\mathscr E(U) =  \int_{U \setminus E} \frac{4 |F'(z)|^2}{(1+|F(z)|^2)^2} \cdot |dz|^2.
$$

\subsection{NED sets}
\label{sec:NED}

In \cite[Theorem 3.1]{VG},  Vodopyanov and Goldshtein showed that NED sets are removable for continuous $W^{1,2}_{\loc}$ functions: if $F \in W^{1,2}_{\loc}(\Omega \setminus E, \mathbb{R})$ extends continuously to $\Omega$, then $F \in W^{1,2}_{\loc}(\Omega, \mathbb{R})$. Another proof is given in \cite[Theorem 7.8]{ntalampekos}. It is easy to see that the Vodopyanov-Goldshtein characterization of NED sets is also valid when one considers Sobolev spaces with values in the complex numbers $\mathbb{C}$ or the Riemann sphere $\sph$.
 \begin{remark}
(i) Since we are dealing with continuous functions, there is no difficulty in defining Sobolev spaces with values in the Riemann sphere by passing to coordinate charts.

(ii) From the  Vodopyanov-Goldshtein characterization of NED sets, it follows that in order to show that a compact set $E \subset \Omega$ is removable for $\mathcal F(\Omega)$, it is enough to show that any spherical Dirichlet function $F \in \mathcal F(\Omega \setminus E)$ extends to a continuous function from $\Omega$ to $\sph$. Indeed, the membership
 $F \in \mathcal F(\Omega \setminus E) \cap W^{1,2}(\Omega, \sph)$ implies that $F$ is weakly holomorphic (in the sense of distributions) on $\Omega$. We may then use Weyl's lemma to conclude that $F$ is strongly holomorphic on $\Omega$.
 \end{remark}

One of our main tools will be a lemma due to Ntalampekos  which allows one to perturb curves off NED sets. Recall that a family of curves $\Gamma_0 \subset \Omega$ has modulus zero, if for any $\varepsilon > 0$, there exists a measurable function $\lambda: \Omega \to [0,\infty)$ such that
$$
\int_\Omega \lambda(z)^2 |dz|^2 < \varepsilon \quad \text{and} \quad \int_\gamma \lambda(z) |dz| \ge 1, \qquad \gamma \in \Gamma_0.
$$

\begin{lemma}[Ntalampekos perturbation lemma]
\label{ntalampekos}
 Let $\Omega \subset \mathbb{C}$ be a domain in the plane and $\gamma$ be a rectifiable path in $\Omega$ that lies outside an exceptional curve family $\Gamma_0 = \Gamma_0(F)$ of modulus zero.
There exists a path $\hat \gamma$ with the same endpoints as $\gamma$ that avoids the set $E$ (except possibly, at the endpoints) with $$d_{\haus}(\gamma, \hat{\gamma}) < \varepsilon, \qquad |L(\hat{\gamma}) - L(\gamma)| < \varepsilon.$$
   If $\gamma$ is a Jordan curve, then one may choose the perturbed path $\hat{\gamma} \subset \Omega \setminus E$ to be a smooth Jordan curve.
\end{lemma}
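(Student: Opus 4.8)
The plan is to reduce the statement, specialized to the present setting, to the curve-perturbation machinery of Ntalampekos \cite{ntalampekos} for NED sets (which applies to an arbitrary $L^2$ density), and I recall the setup and mechanism since that is the heart of the matter. Write $\rho(z) = 2|F'(z)|/(1+|F(z)|^2)$ on $\Omega \setminus E$ and set $\rho \equiv 0$ on $E$; since $F \in \mathcal F(\Omega \setminus E)$ one has $\int_\Omega \rho^2\,|dz|^2 < \infty$, and $L(\gamma) = \int_{\gamma \setminus E}\rho\,|dz| \le \int_\gamma \rho\,|dz|$. The exceptional family $\Gamma_0$ will be a union of two families of modulus zero. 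The first, $\Gamma_1$, consists of the rectifiable curves with $\int_\gamma \rho\,|dz| = \infty$: since $\rho/n$ is admissible for $\{\gamma : \int_\gamma \rho\,|dz| \ge n\}$, that family has modulus at most $n^{-2}\int_\Omega \rho^2 \to 0$, so $\Gamma_1$ has modulus zero and $L(\gamma) < \infty$ whenever $\gamma \notin \Gamma_1$. The second, $\Gamma_2$, is the family of rectifiable $\gamma$ for which, for some $\varepsilon > 0$, no path $\hat\gamma$ with the asserted properties exists; that $\Gamma_2$ has modulus zero is the statement the NED hypothesis supplies, through Ntalampekos's work.

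Fix $\gamma \notin \Gamma_0 := \Gamma_1 \cup \Gamma_2$ and $\varepsilon > 0$, and for small $\delta > 0$ let $N_\delta$ be the $\delta$-neighbourhood of $E$. Two elementary estimates drive everything: $(a)$ $\int_{N_\delta}\rho^2\,|dz|^2 \to 0$ as $\delta \to 0$, because $E$ has zero area (NED sets do) and $\rho^2 \in L^1$; and $(b)$ $\int_{(\gamma \cap N_\delta)\setminus E}\rho\,|dz| \to 0$, because $\int_{\gamma \setminus E}\rho\,|dz| < \infty$ while the sets $(\gamma \cap N_\delta)\setminus E$ shrink to $\emptyset$. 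I would choose $\delta < \varepsilon/10$ small enough that both quantities fall below $\varepsilon/10$ and, after discarding an exceptional set of radii (for a.e.\ $\delta$ the curve $\gamma$ meets $\partial N_\delta$ only finitely often, since $\dist(\cdot,E)$ restricted to $\gamma$ is Lipschitz), decompose $\gamma \cap \overline{N_\delta}$ into finitely many subarcs $\sigma_1,\dots,\sigma_m$, each joining two points of $\partial N_\delta$ (the first and last may instead start or end at an endpoint of $\gamma$, possibly in $E$). Replacing each $\sigma_i$ by a detour $\hat\sigma_i \subset N_\delta \setminus E$ with the same endpoints, chosen to stay within the $\varepsilon$-neighbourhood of $\gamma$, produces a path $\hat\gamma$ with the same endpoints as $\gamma$, disjoint from $E$ off those endpoints, and with $d_{\haus}(\gamma,\hat\gamma) < \varepsilon$.

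The only step that is not soft is that the detours $\hat\sigma_i$ can be chosen with small total $\rho$-length, and here is where NED is genuinely used. If every rectifiable arc in $N_\delta \setminus E$ joining the endpoints of $\sigma_i$ (thickened to small continua inside $\partial N_\delta$, which is disjoint from $E$) had $\rho$-length $\ge \ell$, then $\rho/\ell$ would be admissible for that connecting family, so its modulus would be at most $\ell^{-2}\int_{N_\delta}\rho^2$. But the defining property of an NED set is precisely that deleting $E$ does not lower the modulus of such a connecting family; hence this modulus is bounded below by a positive number depending only on the geometry of $N_\delta$, and consequently some $\hat\sigma_i$ has $\rho$-length $\lesssim \bigl(\int_{N_\delta}\rho^2\bigr)^{1/2}$. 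Summing over the boundedly many detours keeps their total $\rho$-length below $\varepsilon/10$, so from
$$
L(\hat\gamma) = \int_{\gamma \setminus N_\delta}\rho\,|dz| + \sum_i \int_{\hat\sigma_i}\rho\,|dz|
$$
together with estimate $(b)$ we obtain $|L(\hat\gamma) - L(\gamma)| < \varepsilon$.

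For the Jordan-curve refinement I would remove any self-intersections $\hat\gamma$ may have acquired---discarding the resulting loops only decreases $L$, and one keeps enough of the $\varepsilon$-budget that the bounds persist---and then mollify the resulting simple closed curve, which can be done while staying off $E$ because an NED set is totally disconnected and hence does not separate the plane. The main obstacle is the detour estimate of the previous paragraph: obtaining a \emph{uniform} positive lower bound for the modulus of the connecting families, and handling the endpoints honestly (thickening them to continua and then rejoining to the prescribed endpoints of $\gamma$ along short arcs of $\partial N_\delta$, which must themselves be chosen with small $\rho$-length), is the delicate point, and it is there that I would lean on \cite{ntalampekos} rather than redo everything from scratch.
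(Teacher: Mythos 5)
Your proposal is correct and follows essentially the same route as the paper: both specialize to the density $\rho = \frac{2|F'|}{1+|F|^2}\chi_{\Omega\setminus E}$ and invoke Ntalampekos's characterization of NED sets (the paper cites Theorem~7.1, $(I)\Leftrightarrow(VI)$, of \cite{ntalampekos}) to supply the modulus-zero exceptional family and the perturbed path. The detour sketch you add, and the observation that the resulting Jordan curve can then be mollified off $E$ because $\rho$ is continuous on $\Omega\setminus E$, accurately reflect what the cited theorem provides and the paper's accompanying remark.
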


The above statement is a consequence of \cite[Theorem 7.1, $(I) \Leftrightarrow (VI)$]{ntalampekos} with
$$
\rho(z) = \frac{2|F'(z)|}{1+|F(z)|^2} \cdot \chi_{\Omega \setminus E}(z).
$$

\begin{remark} 
(i) Since the metric $\rho$ is continuous on $\Omega \setminus E$, once we perturb a Jordan curve $\gamma$ off $E$, it is easy to perturb $\gamma$ further to obtain a smooth curve. 

(ii) Lemma \ref{ntalampekos} implies that NED sets are totally disconnected, however, the NED property is much more restrictive. For instance, by \cite[Theorem 10]{AB}, NED sets are {\em metrically removable}\/: any two points $x, y \in \mathbb{C}$ can be joined by a curve which avoids the set $E$ (except possibly, at the endpoints) whose Euclidean length is arbitrarily close to $|x-y|$. One can also deduce the metric removability of NED sets from \cite[Theorem 7.1]{ntalampekos} with $\rho = \chi_{\mathbb{C}}$.
For more information on metrically removable sets, we refer the reader to the work of Kalmykov, Kovalev and Rajala \cite{KKR}.
\end{remark}

\subsection{Length-area estimates}

We will use the following classical length-area estimate:

\begin{lemma}
\label{peripheral-curves}
Suppose $A = A(p; s, r) = \{ z \in \mathbb{C}: r < |z| < s \}$ is a round annulus contained in $\Omega$ of modulus $m = (1/2\pi) \log(r/s)$. There exists a simple closed curve $\gamma \notin \Gamma_0$ that separates the two boundary components such that
$$
L(\gamma)^2 \le C_1(m) \cdot \mathscr E(A).
$$
Similarly, there exists a simple curve $\delta \notin \Gamma_0$ that connects the two boundary components with
$$
L(\delta)^2 \le C_2(m) \cdot  \mathscr E(A).
$$
The constants $C_1(m)$ and $C_2(m)$ are uniform as $m$ ranges over a compact subset of $(0,\infty)$. 
The curve $\gamma$ can be taken to be a circle $\partial B(p, \rho)$ with $s < \rho < r$ while $\delta$ can be taken to be a radial line segment 
$[se^{i\theta}, re^{i\theta}]$.
\end{lemma}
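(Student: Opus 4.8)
The plan is to prove the two estimates by the standard length-area / Cauchy-Schwarz argument, averaging over concentric circles in the first case and over radial segments in the second. Throughout, write $\rho(z) = \frac{2|F'(z)|}{1+|F(z)|^2}\,\chi_{\Omega\setminus E}(z)$, so that $L(\gamma)=\int_\gamma \rho\,|dz|$ and $\mathscr E(A) = \int_A \rho^2\,|dz|^2$. Without loss of generality take $p=0$.

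For the separating curve $\gamma$, parametrize the annulus $A(0;s,r)$ in polar coordinates and for each $\rho\in(s,r)$ consider the circle $C_\rho = \partial B(0,\rho)$. By Cauchy-Schwarz,
\[
\Bigl(\int_{C_\rho} \rho(z)\,|dz|\Bigr)^2 \;\le\; 2\pi\rho \int_0^{2\pi} \rho(\rho e^{i\theta})^2\,\rho\,d\theta.
\]
Dividing by $\rho$ and integrating $d\rho$ from $s$ to $r$ gives
\[
\int_s^r \frac{1}{\rho}\Bigl(\int_{C_\rho}\rho(z)\,|dz|\Bigr)^2\,d\rho \;\le\; 2\pi \int_A \rho(z)^2\,|dz|^2 \;=\; 2\pi\,\mathscr E(A).
\]
Since $\int_s^r \frac{d\rho}{\rho} = \log(r/s) = 2\pi m$, there must exist some radius $\rho_0\in(s,r)$ for which $\bigl(\int_{C_{\rho_0}}\rho\,|dz|\bigr)^2 \le \frac{1}{m}\,\mathscr E(A)$; i.e. $L(C_{\rho_0})^2 \le C_1(m)\,\mathscr E(A)$ with $C_1(m) = 1/m$, which is locally bounded on $(0,\infty)$. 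The only extra point is that $C_{\rho_0}$ must be chosen outside the exceptional family $\Gamma_0$: since $\Gamma_0$ has modulus zero and the family $\{C_\rho : s<\rho<r\}$ has positive modulus (it is a genuine curve family filling the annulus), the set of $\rho$ with $C_\rho\in\Gamma_0$ must have measure zero, so a good $\rho_0$ avoiding $\Gamma_0$ and simultaneously satisfying the length bound can be found. Taking $\gamma = C_{\rho_0}$ finishes the first part.

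For the connecting curve $\delta$, the argument is dual: for each $\theta\in[0,2\pi)$ let $I_\theta = [se^{i\theta}, re^{i\theta}]$ be the radial segment. By Cauchy-Schwarz with weight $\frac{1}{t}$,
\[
\Bigl(\int_{I_\theta}\rho(z)\,|dz|\Bigr)^2 = \Bigl(\int_s^r \rho(te^{i\theta})\,dt\Bigr)^2 \le \Bigl(\int_s^r \frac{dt}{t}\Bigr)\Bigl(\int_s^r \rho(te^{i\theta})^2\,t\,dt\Bigr) = 2\pi m \int_s^r \rho(te^{i\theta})^2\,t\,dt.
\]
Integrating $d\theta$ over $[0,2\pi)$ gives
\[
\int_0^{2\pi}\Bigl(\int_{I_\theta}\rho(z)\,|dz|\Bigr)^2 d\theta \;\le\; 2\pi m \int_A \rho(z)^2\,|dz|^2 \;=\; 2\pi m\,\mathscr E(A),
\]
so there is some $\theta_0$ with $L(I_{\theta_0})^2 \le m\,\mathscr E(A)$; set $C_2(m) = m$, again locally bounded. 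As before, the subfamily of radial segments has positive modulus, so almost every $\theta_0$ avoids $\Gamma_0$, and one can choose $\theta_0$ meeting both requirements. Taking $\delta = I_{\theta_0}$ completes the proof.

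I do not expect a genuine obstacle here — this is the textbook length-area lemma adapted to the metric $\rho$. The only subtlety worth stating carefully is the measure-theoretic "good parameter" selection: one needs that for a family of parallel slices (concentric circles, resp. radii) exhausting the annulus, the subfamily lying in a modulus-zero exceptional set $\Gamma_0$ corresponds to a null set of parameters. This follows because if a positive-measure set of slices were in $\Gamma_0$, that subfamily would already have positive modulus (by the same Cauchy-Schwarz computation applied to any admissible $\lambda$ for $\Gamma_0$), contradicting $\operatorname{mod}\Gamma_0 = 0$. Combining that null set with the null set where the length bound fails still leaves a positive-measure — in particular nonempty — set of admissible parameters.
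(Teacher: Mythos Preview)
The paper states this lemma as a ``classical length--area estimate'' and omits the proof; your argument is exactly the standard one (Cauchy--Schwarz over concentric circles, respectively radial segments, followed by a pigeonhole/averaging step) and is correct, including the verification that a null-modulus exceptional family $\Gamma_0$ can only contain a Lebesgue-null set of slice parameters. One cosmetic point: you are using the symbol $\rho$ simultaneously for the density $\frac{2|F'|}{1+|F|^2}\chi_{\Omega\setminus E}$ and for the radial coordinate, which makes expressions like ``$\rho(\rho e^{i\theta})^2\,\rho\,d\theta$'' unnecessarily hard to parse; renaming one of them would help readability.
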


The above lemma allows us to surround NED sets by short multicurves:

\begin{lemma}
\label{good-cover}

Let $E \subset \Omega$ be a compact NED set.
For any $\varepsilon > 0$, one can find a finite collection of smooth Jordan curves $\gamma_1, \gamma_2, \dots, \gamma_n \subset \Omega$ such that:

{\em (i)} the domains $\Gamma_i$ have disjoint closures and cover $E$,

{\em (ii)} $\sum_i L(\gamma_i)^2 < \varepsilon$.

{\em (iii)} $\sum_i \mathscr E(\Gamma_i) < \varepsilon$.

\noindent
If $U$ is an open set containing $E$, we can choose the curves $\gamma_i$ to lie in $U$.
\end{lemma}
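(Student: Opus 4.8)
The strategy is a Whitney/Vitali-type covering argument: cover the compact NED set $E$ by finitely many round annuli on which the length–area estimate of Lemma~\ref{peripheral-curves} applies, extract from each annulus a short separating circle via that lemma, then merge the circles into a collection of Jordan curves whose enclosed domains have disjoint closures and still cover $E$. Finally, one perturbs these circles off $E$ using Lemma~\ref{ntalampekos} to make them smooth Jordan curves in $\Omega\setminus E$ without spoiling the estimates. Let me describe the steps.

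\medskip

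\noindent\emph{Step 1: a dyadic annulus cover with controlled total energy.} Since $E$ has measure zero (indeed NED sets are totally disconnected, and more to the point $\mathscr E(\Omega\setminus E)<\infty$), for any $\eta>0$ there is an open set $U_0$ with $E\subset U_0\subset U$, $\overline{U_0}\subset\Omega$, and $\mathscr E(U_0)<\eta$. Now cover $E$ by a Whitney-type family of round balls $B(p_j,\rho_j)$, $j=1,\dots,n$, with $B(p_j,2\rho_j)\subset U_0$ and the concentric annuli $A_j=A(p_j;\rho_j,2\rho_j)$ having bounded overlap (say multiplicity $\le M$, an absolute constant) — this is a standard Besicovitch/Whitney construction since $E$ is compact. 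Each $A_j$ has modulus $m=(1/2\pi)\log 2$, a fixed number, so the constants $C_1(m),C_2(m)$ of Lemma~\ref{peripheral-curves} are uniform. Applying the lemma to each $A_j$ gives a circle $\gamma_j=\partial B(p_j,\sigma_j)$ with $\rho_j<\sigma_j<2\rho_j$, $\gamma_j\notin\Gamma_0$, and
$$
L(\gamma_j)^2\le C_1(m)\cdot\mathscr E(A_j).
$$
Summing and using bounded overlap, $\sum_j L(\gamma_j)^2\le C_1(m)\,M\,\mathscr E(U_0)<C_1(m)\,M\,\eta$, and likewise $\sum_j\mathscr E(B(p_j,\sigma_j))\le M\,\mathscr E(U_0)<M\eta$. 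Choosing $\eta$ small gives (ii) and (iii) for the raw circles; the balls $B(p_j,\sigma_j)$ cover $E$ and lie in $U$.

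\medskip

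\noindent\emph{Step 2: merging into disjoint-closure Jordan domains.} The balls $B(p_j,\sigma_j)$ cover $E$ but may overlap, so (i) fails. Replace the family by the boundary curves of the connected components of $V=\bigcup_j B(p_j,\sigma_j)$: let $\Gamma_1',\dots,\Gamma_m'$ be these components (an open set with finitely many components), and let $\gamma_i'=\partial\Gamma_i'$ — or rather, fill in holes and take $\Gamma_i'$ to be the component together with the bounded complementary components, so that $\partial\Gamma_i'$ is a single Jordan curve. The components have disjoint closures (if $E$ is totally disconnected one may also shrink slightly to ensure the closures of distinct components are disjoint — automatic here since they are a finite union of open balls, so distinct components are at positive distance). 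Each boundary arc of $\Gamma_i'$ is a sub-arc of some $\gamma_j$, so $L(\partial\Gamma_i')\le\sum_{j}L(\gamma_j)$ over the relevant $j$'s, and $\sum_i L(\gamma_i')^2\le\bigl(\sum_j L(\gamma_j)\bigr)^2$. This crude bound is \emph{not} good enough — squaring after summing loses too much. The fix: keep the circles $\gamma_j$ themselves rather than the merged boundaries, but note that overlapping circles $\gamma_j$ can be resolved by discarding those that lie inside the union of the others, and the boundary of a component $\Gamma_i'$ is then composed of arcs of \emph{distinct} circles $\gamma_j$, each used at most once; since each full $\gamma_j$ contributes to at most one component, $\sum_i L(\gamma_i')^2\le\sum_i\bigl(\sum_{j\in J_i}L(\gamma_j)\bigr)^2$ with the $J_i$ disjoint, and by Cauchy–Schwarz $\bigl(\sum_{j\in J_i}L(\gamma_j)\bigr)^2\le |J_i|\sum_{j\in J_i}L(\gamma_j)^2\le n\sum_{j\in J_i}L(\gamma_j)^2$. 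This still has a factor $n$. The clean way out is to run Step~1 with $\eta=\eta(n)$ chosen \emph{after} $n$ is fixed — but $n$ depends on the Whitney cover, which depends on nothing but $E$, so $n=n(E)$ is a fixed (if uncontrolled) integer, and we simply take $\eta<\varepsilon/(n\,C_1(m)\,M)$ at the outset. Then $\sum_i L(\gamma_i')^2<\varepsilon$, and similarly for the energies, since $\Gamma_i'\subset U_0$ with bounded overlap.

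\medskip

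\noindent\emph{Step 3: perturbation off $E$.} The merged curves $\gamma_i'$ are piecewise-circular Jordan curves that may still meet $E$. First smooth the corners in $\Omega\setminus E$ where possible; then, since the circles $\gamma_j$ were chosen outside $\Gamma_0$, and sub-arcs and finite concatenations of curves outside $\Gamma_0$ remain outside $\Gamma_0$ (a modulus-zero family is closed under such operations up to null modifications; more carefully, one chooses the radii $\sigma_j$ so that $\gamma_j\notin\Gamma_0$ \emph{and} the finitely many intersection points are controlled), apply Lemma~\ref{ntalampekos} to each Jordan curve $\gamma_i'$ with tolerance $\varepsilon/n$ to obtain a smooth Jordan curve $\gamma_i\subset\Omega\setminus E$ with $d_{\haus}(\gamma_i,\gamma_i')<\varepsilon/n$ and $|L(\gamma_i)-L(\gamma_i')|<\varepsilon/n$. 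Choosing all tolerances small enough, the perturbed domains $\Gamma_i$ still have disjoint closures (Hausdorff-close to the original disjoint-closure family) and still cover $E$ (here we use that $E$ is totally disconnected and lies strictly inside $\bigcup\Gamma_i'$, e.g. enlarge the original balls slightly so $E$ is covered with room to spare), while (ii) and (iii) persist after absorbing the $\varepsilon/n$ errors. Finally, if an open set $U\supset E$ is prescribed, the entire construction takes place inside $U$ by choosing $U_0\subset U$ at the start.

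\medskip

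\noindent\textbf{Main obstacle.} The delicate point is Step~2: passing from the overlapping raw circles to a family with \emph{disjoint closures} while keeping $\sum L(\gamma_i)^2$ small. Naively taking boundaries of the union introduces a combinatorial factor (the number of arcs), which must be absorbed by choosing the energy threshold $\eta$ small relative to the — a priori fixed but uncontrolled — cardinality $n=n(E)$ of the Whitney cover. Making sure that after merging and perturbing, $E$ is still covered and the closures stay disjoint (rather than just tangent) requires building in a definite amount of slack in the original ball cover; this is routine but must be tracked carefully. The length–area input (Lemma~\ref{peripheral-curves}) and the perturbation (Lemma~\ref{ntalampekos}) are used as black boxes, so no analytic difficulty arises beyond this bookkeeping.
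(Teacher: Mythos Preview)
Your overall strategy matches the paper's, but Step~2 contains a genuine gap. You claim that the number $n$ of balls in the Whitney/Besicovitch cover ``depends on nothing but $E$'' and then propose to choose $\eta<\varepsilon/(n\,C_1(m)\,M)$. This is circular: the cover is built so that $B(p_j,2\rho_j)\subset U_0$, and $U_0$ was chosen with $\mathscr E(U_0)<\eta$. As $\eta\to 0$, the neighbourhood $U_0$ must shrink down to $E$ (since $E$ has measure zero while any fixed open neighbourhood carries positive energy), which forces the radii $\rho_j\to 0$ and hence $n=n(\eta)\to\infty$. So the factor $n$ (or $|J_i|$) coming from Cauchy--Schwarz cannot be absorbed by making $\eta$ small. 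Besicovitch bounded overlap controls how many balls pass through a given point, not how many balls can chain together into a single connected component of $\bigcup_j B(p_j,\sigma_j)$; with variable radii that number is unbounded.

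The paper avoids this by using a \emph{regular lattice} instead of a Whitney cover: it places annuli $A(p;2\delta,3\delta)$ at all grid points $p\in\delta\mathbb{Z}^2$ with $A\subset U$, all at the \emph{same} scale $\delta$. Each $\eta_A$ then encloses $B(p,2\delta)$ and lies in $B(p,3\delta)$, so any bounded complementary component $\Gamma_i$ of $\mathbb{C}\setminus\bigcup_A\eta_A$ that meets $E$ is trapped inside some $\eta_A$ and hence has diameter at most $6\delta$. Consequently $\partial\Gamma_i$ is assembled from arcs of an \emph{absolutely bounded} number of curves $\eta_A$ --- a constant depending only on the lattice geometry, not on $\delta$ or on the total number of annuli. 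This is precisely what makes $\sum_i L(\gamma_i)^2\le C_3\varepsilon$ work with a universal $C_3$. Replacing your variable-radius cover by this uniform-scale grid fixes the argument; the rest of your outline (length--area on each annulus, perturbation off $E$ via Lemma~\ref{ntalampekos}, slight shrinking to separate closures) is correct and matches the paper.
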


\begin{proof}
Since NED sets have measure zero, by shrinking $U$ if necessary, we may assume that $\mathscr E(U) < \varepsilon$. We then choose $\delta > 0$ so that $\dist(E, \partial U) \ge 4\delta$.

Let $\mathcal A$ be the collection of round annuli $A(p; 2\delta, 3\delta) \subset U$ with $p \in \delta \mathbb{Z}^2$. For each annulus $A \in \mathcal A$, we can use Lemma \ref{peripheral-curves} to select a simple curve $\eta_A \subset A$ that separates the two boundary components of $A$ with $L(\eta_A)^2 \le C_1 \cdot \mathscr E(A)$. Using Lemma \ref{ntalampekos} to perturb the curves $\eta_A$ if necessary, we may assume that they don't pass through $E$.

 Since a point is contained in a bounded number of annuli $A \in \mathcal A$,
$$
\sum_{A \in \mathcal A} L(\eta_A)^2 \, \le \, C_1 \sum_{A \in \mathcal A}  \mathscr E(A) \, < \,  C_2 \varepsilon.
$$
Let $\{\Gamma_i\}$ be the bounded complementary components of
$\mathbb{C} \setminus \bigcup_{A \in \mathcal A} \eta_A$ which contain a point of $E$. Since $E$ is a compact set, the collection
$\{\Gamma_i\}$ is finite. Let $n$ be its cardinality.

Since each $\gamma_i = \partial \Gamma_i$, $i = 1, 2, \dots, n$, is composed of arcs from a bounded number of $\eta_A$,
$$
\sum_{i=1}^n L(\gamma_i)^2 < C_3\varepsilon.
$$
By shrinking the domains $\Gamma_i$ slightly (and changing $L(\gamma_i)$ and $\mathscr E(\Gamma_i$) by an arbitrarily small amount), we can make the $\gamma_i$ smooth and  $\overline{\Gamma_i}$ disjoint.
For instance, one can form the Riemann maps $\varphi_i: \mathbb{D} \to \Gamma_i$ and replace $\gamma_i$ with $\gamma_{i,r} = \varphi_i(\partial \mathbb{D}_r)$ for some $r$ sufficiently close to 1. Indeed, as the curves $\{\gamma_i\}$ are rectifiable, the associated Riemann maps $\{\varphi_i\}$ have derivative in the Hardy space $H^1$ and the $L(\gamma_{i,r})$ converge to $L(\gamma_i)$.

 To complete the proof, we replace $\varepsilon$ by 
$\min \bigl (\varepsilon,\, \varepsilon/C_3 \bigr )$ to get (ii).
\end{proof}

\subsection{An isoperimetric estimate}

Let $\omega = \frac{4 dx \wedge dy}{(1+|x|^2 +|y|^2)^2}$ denote the volume form on the sphere $\sph$. For a point $p  \in \sph$, let 
\begin{itemize}
\item $U_p$ be the hemisphere centered at $p$,
\item  $p^*$ be its diametrically opposite point.
\end{itemize}
Since $\hat{\mathbb{C}} \setminus \{p^*\}$ is contractible, there is a 1-form $\beta_p$ on $\hat{\mathbb{C}} \setminus \{p^*\}$ so that $\omega = d\beta_p$. By subtracting a 1-form with constant coefficients from $\beta_p$, we may assume that $\beta_p$ vanishes at $p$, so that
 $$
 \|\beta_p(x)\| \le C_p \dist_{\sph}(p, x), \qquad x \in U_p.
 $$
In particular, if $\eta$ is a curve passing through $p$ and is contained in $U_p$, then
\begin{equation}
\label{eq:stokes-estimate}
\biggl |\int_{\eta} \beta_p \biggr | \le C_p \cdot \ell_{\sph}(\eta)^2,
\end{equation}
where $\ell_{\sph}(\eta)$ is the length of $\eta$ as measured in the spherical metric $\rho_{\sph} = \frac{2|dz|}{1+|z|^2}$. If we select the 1-forms
$\{\beta_p\}_{p \in \hat{\mathbb{C}}}$ in an $\SO(3)$-invariant way, then the constant $C_p$ in (\ref{eq:stokes-estimate}) will be independent of $p$.

\begin{lemma}
 \label{basic-isoperimetry}
Suppose $\Gamma \setminus \bigcup_{i=1}^n \overline{\Gamma_i} \subset \mathbb{C}$ is a domain with smooth Jordan boundary and $F$ is a meromorphic function defined on a neighbourhood of $\Gamma \setminus \bigcup_{i=1}^n \overline{\Gamma_i}$. Then,
$$
\dist \Bigl ( \mathscr E \bigl ( \Gamma \setminus \bigcup \overline{\Gamma_i}  \bigr ), \ 4\pi\mathbb{Z} \Bigr) \le C \Bigl (\sum L(\gamma_i)^2 + L(\gamma)^2 \Bigr ),
$$
where $C > 0$ is a universal constant.
\end{lemma}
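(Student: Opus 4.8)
The plan is to interpret $\mathscr E(\Omega')$, where $\Omega' := \Gamma \setminus \bigcup_{i=1}^n \overline{\Gamma_i}$, as the spherical area of the image of $F$ counted with multiplicity, and to show that the corresponding multiplicity function is \emph{constant} away from a small region that surrounds the images of the boundary curves. Denote the boundary curves of $\Omega'$ by $c_0 = \gamma, c_1 = \gamma_1, \dots, c_n = \gamma_n$, so that $\partial \Omega' = \sum_j \epsilon_j c_j$ as an oriented $1$-cycle with signs $\epsilon_j \in \{\pm 1\}$, and write $\ell_j := L(c_j)$. Since $\dist(\mathscr E(\Omega'), 4\pi\mathbb{Z}) \le 2\pi$ always, the conclusion is trivial as soon as $\sum_j \ell_j^2 \ge 1$; so we may assume $\sum_j \ell_j^2 < 1$. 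For each $j$ pick a point $q_j$ on the loop $F \circ c_j$ and put $B_j := B_{\sph}(q_j, \ell_j)$. As $F \circ c_j$ has spherical length $\ell_j < 1$, it lies in $B_j$, and $B_j$ is a topological disk contained in the hemisphere $U_{q_j}$ and missing the antipode $q_j^*$; moreover $\omega(B_j) \le \pi \ell_j^2$, so $\sum_j \omega(B_j) < \pi < 4\pi$ and the balls $B_j$ do not cover $\sph$.

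For $w \notin F(\partial\Omega')$, let $\nu(w) := \deg(F|_{\Omega'}, w)$; since $F$ is holomorphic this degree agrees for almost every $w$ with the number of solutions of $F(z) = w$ in $\Omega'$, so by the area formula $\mathscr E(\Omega') = \int_{\Omega'} F^* \omega = \int_{\sph} \nu \, d\omega$. The heart of the matter is that $\nu$ takes a single value, say $d$, at every $w \in \sph \setminus \bigcup_j \overline{B_j}$ --- \emph{even if this set is disconnected}. Indeed, for two such points $w_1, w_2$ and any path $\sigma$ from $w_2$ to $w_1$ in $\sph$, one has $\nu(w_1) - \nu(w_2) = \sum_j \epsilon_j \operatorname{int}(\sigma, F \circ c_j)$, and the intersection number $\operatorname{int}(\sigma, F \circ c_j)$ is a homotopy invariant of the loop $F \circ c_j$ in $\sph \setminus \{w_1, w_2\}$; as $F \circ c_j$ lies in the contractible set $B_j$, which misses $w_1$ and $w_2$, it is null-homotopic there and each term vanishes. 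Fix a base point $w_0 \in \sph \setminus \bigcup_j \overline{B_j}$ and let $g_j(w) := \operatorname{int}(\sigma_{w_0 \to w}, F \circ c_j)$ be the winding number of $F \circ c_j$ about $w$ relative to $w_0$; the same argument shows $g_j$ is supported in $B_j$ and $\nu - d = \sum_j \epsilon_j g_j$ (as functions of $w$). Therefore
\[
\mathscr E(\Omega') \;=\; 4\pi d \;+\; \sum_j \epsilon_j \int_{B_j} g_j \, d\omega.
\]

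It remains to bound each term by $C \ell_j^2$. In the chart $\psi \colon \sph \setminus \{w_0\} \to \mathbb{C}$ given by a rotation followed by stereographic projection from $w_0$, the function $g_j$ becomes the ordinary planar winding number of the loop $\psi \circ F \circ c_j$ --- that is, the indicator, counted with multiplicity, of the region it bounds. This region lies in $\psi(B_j)$, on which $\psi_* \beta_{q_j}$ is a well-defined primitive of $\psi_* \omega$ (since $B_j$ misses $q_j^*$), so the form of Stokes' theorem asserting that the area enclosed by a loop, counted with multiplicity, equals the integral of a primitive over the loop gives
\[
\int_{B_j} g_j \, d\omega \;=\; \int_{\psi \circ F \circ c_j} \psi_* \beta_{q_j} \;=\; \int_{F \circ c_j} \beta_{q_j}.
\]
Since $F \circ c_j$ passes through $q_j$ and is contained in $U_{q_j}$, the isoperimetric estimate \eqref{eq:stokes-estimate} gives $\bigl| \int_{F \circ c_j} \beta_{q_j} \bigr| \le C \ell_j^2$ with $C$ universal. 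Summing over $j$,
\[
\bigl| \mathscr E(\Omega') - 4\pi d \bigr| \;\le\; C \sum_j \ell_j^2 \;=\; C\Bigl( \sum_{i=1}^n L(\gamma_i)^2 + L(\gamma)^2 \Bigr),
\]
which is even stronger than the stated estimate.

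The step I expect to be the main obstacle is the topological assertion that $\nu$ is \emph{globally} constant outside $\bigcup_j \overline{B_j}$. Local constancy on each connected component is immediate, but the union of the small balls can genuinely disconnect the sphere --- a ``necklace'' of $\asymp 1/\delta$ balls of radius $\delta$ still has $\sum_j \ell_j^2 \asymp \delta$ small --- so it is essential that the intersection-number / null-homotopy argument does not require $w_1$ and $w_2$ to lie in the same component. The reduction $\sum_j \ell_j^2 < 1$ is precisely what guarantees that the balls leave a nonempty complement, so that the integer $d$ (and the base point $w_0$) makes sense at all. The remaining ingredients --- the area formula and the Stokes identity relating $\int_{B_j} g_j\, d\omega$ to $\int_{F \circ c_j} \beta_{q_j}$ in a chart --- are routine.
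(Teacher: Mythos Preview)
Your proof is correct and essentially matches the paper's: both reduce to the case of short boundary curves, observe that each image loop $F(c_j)$ lies in a hemisphere admitting a primitive $\beta_{q_j}$ of $\omega$, and conclude via Stokes and \eqref{eq:stokes-estimate} that the defect of $\mathscr E(\Omega')$ from $4\pi\mathbb{Z}$ is bounded by $C\sum_j \ell_j^2$. The paper packages the topological step by capping each $F(c_j)$ with an immersed disk $D_j \subset U_{q_j}$ to form a closed immersed surface in $\sph$ (whose $\omega$-integral lies in $4\pi\mathbb{Z}$ by homology), whereas you unpack the same fact as the decomposition $\nu = d + \sum_j \epsilon_j g_j$ of the counting function; your null-homotopy argument for global constancy of $\nu$ outside $\bigcup_j B_j$ is exactly what the homological formulation absorbs implicitly.
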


In the proof below, we think of $F(\gamma)$ and $F(\gamma_i)$ as parametrized curves and $F \bigl ( \Gamma \setminus \bigcup \overline{\Gamma_i} \bigr )$ as a parametrized surface immersed in the sphere.

\begin{proof}
We may assume that $L(\gamma)$  and $L(\gamma_i)$, $i = 1, 2, \dots, n$, are less than $\pi/2$, otherwise there is nothing to prove. Pick arbitrary points $p$ and $p_i$ on $F(\gamma)$ and $F(\gamma_i)$ respectively. By construction, $F(\gamma)$ and $F(\gamma_i)$ are contained in hemispheres $U$ and $U_i$ centered at $p$ and $p_i$ respectively.

Perturbing the curves $\gamma$ and $\gamma_i$ if necessary, we may assume that they do not pass through the critical points of $F$, so that  $F(\gamma)$ and $F(\gamma_i)$ are smooth immersed curves.

Choose smooth immersed disks $D \subset U$ with $\partial D = F(\gamma)$ and $D_i \subset U_i$ with $\partial D_i = F(\gamma_i)$, $i =1, 2, \dots, n$. Then,
$$
S \, = \, F \biggl ( \Gamma \setminus \bigcup_{i=1}^n \overline{\Gamma_i} \biggr ) \, \cup \, D \, \cup \, \bigcup_{i=1}^n D_i
$$
is a closed immersed surface in the sphere. Since the homology class of $S$ is an integral multiple of the homology class of the sphere,
$$
\int_{F(\Gamma \setminus \bigcup \overline{\Gamma_i})} \omega \, + \, \int_D \omega \, + \, \sum_{i=1}^n \int_{D_i} \omega \ = \ 4 \pi k,
$$
for some $k \in \mathbb{Z}$. By Stokes theorem,
 \begin{equation}
 \label{eq:basic-isoperimetry}
4 \pi k - \mathscr E \biggl ( \Gamma \setminus \bigcup_{i=1}^n \overline{\Gamma_i} \biggr ) \ = \,  \int_{F(\gamma)} \beta \, + \, \sum_{i=1}^n \int_{F(\gamma_i)} \beta_i.
 \end{equation}
It remains to bound the terms on the right hand side of (\ref{eq:basic-isoperimetry}) using (\ref{eq:stokes-estimate}).
\end{proof}

\section{An energy estimate}

In this section, we show the following estimate on the decay of energy:

\begin{lemma}
\label{decay-of-energy}
Let $F \in \mathcal F(\Omega \setminus E)$ be a spherical Dirichlet function.
If $B(p,r) \subset \Omega$ and $\mathscr E(B(p,r)) \le 2\pi$, then
\begin{equation}
\label{eq:decay-of-energy}
 \mathscr E(B(p,s)) \le (s/r)^\alpha \cdot \mathscr E(B(p,r)), \qquad 0 < s < r,
\end{equation}
where $\alpha > 0$ is a universal constant.
\end{lemma}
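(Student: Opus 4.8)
The plan is to run a standard monotonicity/hole-filling argument of the type familiar from harmonic maps and minimal surfaces, using Lemma \ref{basic-isoperimetry} as the crucial input that replaces the usual isoperimetric inequality on the target. Write $e(s) = \mathscr E(B(p,s))$ for $0 < s < r$. The goal is a differential inequality of the form $e(s) \le \frac{s}{\alpha}\, e'(s)$ for a.e.\ $s$, which integrates to \eqref{eq:decay-of-energy}. To obtain it, fix $s$ and apply Lemma \ref{basic-isoperimetry} with $\Gamma = B(p,r)$, no inner domains $\Gamma_i$ (so the statement reads $\dist(\mathscr E(B(p,\rho)), 4\pi\mathbb{Z}) \le C\, L(\gamma_\rho)^2$ for a suitable circle $\gamma_\rho = \partial B(p,\rho)$). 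The point is that as long as $\mathscr E(B(p,r)) \le 2\pi$, the energy $e(\rho) \le 2\pi < 4\pi$ for all $\rho \le r$, so the nearest point of $4\pi\mathbb{Z}$ is $0$, and we get the clean bound $e(\rho) \le C\, L(\partial B(p,\rho))^2$ with no integer ambiguity.

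Next I would bound $L(\partial B(p,\rho))^2$ by a multiple of $\rho\, e'(\rho)$. By the length–area (Cauchy–Schwarz) inequality,
$$
L(\partial B(p,\rho))^2 \;=\; \left( \int_{\partial B(p,\rho) \setminus E} \frac{2|F'|}{1+|F|^2}\,|dz| \right)^2 \;\le\; 2\pi\rho \int_{\partial B(p,\rho)\setminus E} \left( \frac{2|F'|}{1+|F|^2}\right)^2 |dz|,
$$
and since $e(\rho) = \int_0^\rho \left( \int_{\partial B(p,t)\setminus E} \bigl(\tfrac{2|F'|}{1+|F|^2}\bigr)^2 |dz|\right) dt$ by polar coordinates, the inner integral is exactly $e'(\rho)$ for a.e.\ $\rho$. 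Combining with the previous paragraph gives $e(\rho) \le 2\pi C\, \rho\, e'(\rho)$ for a.e.\ $\rho \in (0,r)$, i.e.\ $\frac{d}{d\rho}\log e(\rho) \ge \frac{\alpha}{\rho}$ with $\alpha = 1/(2\pi C)$. Integrating from $s$ to $r$ yields $\log e(r) - \log e(s) \ge \alpha \log(r/s)$, which is \eqref{eq:decay-of-energy}.

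There is a technical wrinkle I would need to address: Lemma \ref{basic-isoperimetry} is stated for curves $\gamma$ avoiding $E$ with smooth image, whereas the circle $\partial B(p,\rho)$ generally meets $E$. This is exactly what the Ntalampekos perturbation lemma (Lemma \ref{ntalampekos}) is for — one perturbs $\partial B(p,\rho)$ slightly to a nearby smooth Jordan curve avoiding $E$, changing $L$ and the enclosed energy by an arbitrarily small amount, for all $\rho$ outside the modulus-zero exceptional family; since almost every radius $\rho$ is admissible and the final inequality is an integrated one, discarding a measure-zero set of bad radii costs nothing. One should also restrict to radii $\rho$ for which $\partial B(p,\rho)$ has finite $L$ (true for a.e.\ $\rho$ since $e(r) < \infty$) and avoids the critical points of $F$. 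I expect this measure-theoretic bookkeeping — justifying the pointwise differential inequality for a.e.\ $\rho$ from the perturbed isoperimetric estimate, and the fact that $e$ is absolutely continuous so that the fundamental theorem of calculus applies — to be the only real obstacle; the geometric heart of the argument is the clean "no integer ambiguity" observation enabled by the hypothesis $\mathscr E(B(p,r)) \le 2\pi$.
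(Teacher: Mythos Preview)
Your overall architecture is exactly right and matches the paper: use the small-energy hypothesis to kill the $4\pi$-ambiguity in the isoperimetric estimate, feed the resulting bound $\mathscr E(B(p,\rho)) \lesssim L(\partial B(p,\rho))^2$ into Cauchy--Schwarz to get $e(\rho) \lesssim \rho\, e'(\rho)$, and integrate. The paper does precisely this.

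However, there is a genuine gap in your application of Lemma~\ref{basic-isoperimetry} ``with no inner domains $\Gamma_i$.'' That lemma requires $F$ to be meromorphic on a neighbourhood of the domain $\Gamma \setminus \bigcup \overline{\Gamma_i}$. If you take $\Gamma = B(p,\rho)$ and $n=0$, you need $F$ meromorphic on all of $\overline{B(p,\rho)}$, but $F$ is only given on $\Omega \setminus E$, and $E$ will typically meet the \emph{interior} of the ball. You correctly identify a ``technical wrinkle,'' but you misdiagnose it: you focus on the boundary circle meeting $E$, which the Ntalampekos perturbation does handle, while the real obstruction is that Stokes' theorem in the proof of Lemma~\ref{basic-isoperimetry} needs an honest meromorphic map on the whole enclosed region, and perturbing the outer curve does nothing for $E$ inside.

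The paper closes this gap by inserting an intermediate step (its Lemma~\ref{isoperimetry}): given a non-exceptional Jordan curve $\gamma$ with $\mathscr E(\Gamma) < 2\pi$, first perturb $\gamma$ off $E$ (Ntalampekos), then use the good-cover Lemma~\ref{good-cover} to surround $E\cap\Gamma$ by finitely many small Jordan domains $\Gamma_i$ with $\sum L(\gamma_i)^2 < \varepsilon$ and $\sum \mathscr E(\Gamma_i) < \varepsilon$, and only then apply Lemma~\ref{basic-isoperimetry} to the excised region $\Gamma \setminus \bigcup \overline{\Gamma_i}$, where $F$ is genuinely meromorphic. Letting $\varepsilon \to 0$ yields $\mathscr E(\Gamma) \lesssim L(\gamma)^2$. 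This excision step is precisely where the NED hypothesis on $E$ is used (beyond measure zero), and it is the substantive content missing from your sketch; once it is in place, the rest of your argument goes through verbatim.
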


The proof of Lemma \ref{decay-of-energy} uses the following isoperimetric bound:

\begin{lemma}
\label{isoperimetry}
Suppose $\gamma \subset \Omega$ is a Jordan curve with $\mathscr E(\Gamma) < 2\pi$. If $\gamma \notin \Gamma_0$ is not exceptional, then
$
\mathscr E(\Gamma) \lesssim L(\gamma)^2.
$
\end{lemma}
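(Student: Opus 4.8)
My plan is to reduce everything to the isoperimetric inequality of Lemma \ref{basic-isoperimetry}, after first getting the curve $\gamma$ and the obstruction $E$ out of each other's way, and then to use the hypothesis $\mathscr E(\Gamma) < 2\pi$ to kill the $4\pi\mathbb Z$-ambiguity in that lemma.

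Fix a small $\varepsilon \in \bigl(0,\, 2\pi - \mathscr E(\Gamma)\bigr)$, to be sent to $0$ at the end. Since $\gamma \notin \Gamma_0$, Lemma \ref{ntalampekos} gives a smooth Jordan curve $\tilde\gamma \subset \Omega \setminus E$ with $L(\tilde\gamma) < L(\gamma) + \varepsilon$ that is as Hausdorff-close to $\gamma$ as we wish. I would first argue that by taking it close enough we also get $|\mathscr E(\tilde\Gamma) - \mathscr E(\Gamma)| < \varepsilon$, so that in particular $\mathscr E(\tilde\Gamma) < 2\pi$; see the last paragraph for why this is the one delicate point. Since $\tilde\gamma$ misses $E$, the set $E \cap \tilde\Gamma = E \cap \overline{\tilde\Gamma}$ is a compact subset of $E$ (hence a compact NED set) contained in the open topological disk $\tilde\Gamma$. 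Applying Lemma \ref{good-cover} to it, with an ambient open neighbourhood chosen small enough to be compactly contained in $\tilde\Gamma$, I obtain smooth Jordan curves $\gamma_1, \dots, \gamma_n$ with pairwise disjoint closures $\overline{\Gamma_i} \subset \tilde\Gamma$, covering $E \cap \tilde\Gamma$, and with $\sum_i L(\gamma_i)^2 < \varepsilon$ and $\sum_i \mathscr E(\Gamma_i) < \varepsilon$.

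Now set $W = \tilde\Gamma \setminus \bigcup_i \overline{\Gamma_i}$. Its boundary is the disjoint union of the smooth Jordan curves $\tilde\gamma, \gamma_1, \dots, \gamma_n$, and $\overline W$ is a compact subset of $\Omega \setminus E$ (it avoids $E$ precisely because $\tilde\gamma$ and the $\gamma_i$ avoid $E$ while $E \cap \tilde\Gamma \subset \bigcup_i \Gamma_i$), so $F$ is meromorphic on a neighbourhood of $\overline W$. Lemma \ref{basic-isoperimetry} then gives
\[
\dist\bigl(\mathscr E(W),\,4\pi\mathbb Z\bigr) \;\le\; C\Bigl(\sum_i L(\gamma_i)^2 + L(\tilde\gamma)^2\Bigr) \;<\; C\bigl(\varepsilon + L(\tilde\gamma)^2\bigr).
\]
But $\mathscr E(W) = \mathscr E(\tilde\Gamma) - \sum_i \mathscr E(\Gamma_i) \in [0,\, \mathscr E(\tilde\Gamma)) \subset [0, 2\pi)$, so the nearest integer multiple of $4\pi$ is $0$ and $\dist(\mathscr E(W),4\pi\mathbb Z) = \mathscr E(W)$. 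Chaining the inequalities, $\mathscr E(\Gamma) < \mathscr E(\tilde\Gamma) + \varepsilon = \mathscr E(W) + \sum_i \mathscr E(\Gamma_i) + \varepsilon < \mathscr E(W) + 2\varepsilon < C\,L(\tilde\gamma)^2 + (C+2)\varepsilon < C\bigl(L(\gamma)+\varepsilon\bigr)^2 + (C+2)\varepsilon$, and letting $\varepsilon \to 0^+$ yields $\mathscr E(\Gamma) \le C\,L(\gamma)^2$ with the same universal constant $C$ as in Lemma \ref{basic-isoperimetry}.

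The step that is not pure bookkeeping is the claim $|\mathscr E(\tilde\Gamma) - \mathscr E(\Gamma)| < \varepsilon$, i.e.\ the continuity of $\Gamma \mapsto \mathscr E(\Gamma)$ under the perturbation. I would establish it from three ingredients: (i) a uniformly close perturbation of a Jordan curve changes the enclosed domain only inside a small neighbourhood of the original curve, so $\Gamma \triangle \tilde\Gamma$ lies in a thin tube around $\gamma$; (ii) $\gamma$ is rectifiable (its spherical length $L(\gamma)$ is finite and, in this paper, $L$ is only defined on rectifiable curves), so a thin tube around it has arbitrarily small planar area; and (iii) the measure $F^{*}dA_{\sph}$ has finite total mass on $\Omega$ because $F \in \mathcal F(\Omega \setminus E)$, hence is absolutely continuous with respect to planar Lebesgue measure, so its mass on the tube is arbitrarily small. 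Making (i) precise — e.g.\ via winding numbers, or by observing that $\Gamma$ and $\tilde\Gamma$ agree on every connected component of the complement of the tube — is the main obstacle; once it is in place, the rest is a straightforward assembly of Lemmas \ref{ntalampekos}, \ref{good-cover} and \ref{basic-isoperimetry}.
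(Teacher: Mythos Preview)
Your proof is correct and follows the same route as the paper: perturb $\gamma$ off $E$ via Lemma~\ref{ntalampekos}, excise $E$ from the interior with the small curves of Lemma~\ref{good-cover}, apply Lemma~\ref{basic-isoperimetry}, and use the hypothesis $\mathscr E(\Gamma)<2\pi$ to force the nearest point of $4\pi\mathbb Z$ to be $0$. The paper dispatches the perturbation in one sentence (``we may assume that $\gamma \subset \Omega \setminus E$ is a smooth Jordan curve'') without addressing your ``delicate step'', so your absolute-continuity argument for $|\mathscr E(\tilde\Gamma)-\mathscr E(\Gamma)|<\varepsilon$ is actually more careful than the original.
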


\begin{proof}
By the Ntalampekos perturbation lemma, we may assume that $\gamma \subset \Omega \setminus E$ is a smooth Jordan curve. For any $\varepsilon > 0$, we can use Lemma \ref{good-cover} to produce a collection of smooth Jordan domains $\{\Gamma_i\}$ with disjoint closures such that
$$
E \cap \Gamma \subset \bigcup_{i=1}^n \Gamma_i \subset \Gamma, \qquad \sum_{i=1}^n \mathscr E(\Gamma_i) < \varepsilon, \qquad \sum_{i=1}^n L(\gamma_i)^2 < \varepsilon.
$$
By Lemma \ref{basic-isoperimetry}, we have
$$
 \mathscr E \biggl ( \Gamma \setminus \bigcup_{i=1}^n \overline{\Gamma_i} \biggr ) \, \lesssim \, L(\gamma)^2 + \sum_{i=1}^n L(\gamma_i)^2 \, \le \, 
 L(\gamma)^2 + \varepsilon,
$$
so that $\mathscr E(\Gamma) =  \mathscr E \bigl ( \Gamma \setminus \bigcup_{i=1}^n \overline{\Gamma_i} \bigr ) 
+  \sum_{i=1}^n \mathscr E(\Gamma_i)  \lesssim L(\gamma)^2 + 2\varepsilon$.
The lemma follows since $\varepsilon > 0$ was arbitrary.
\end{proof}

Following \cite{PW}, the proof of Lemma \ref{decay-of-energy} runs as follows:
  
 \begin{proof}[Proof of Lemma \ref{decay-of-energy}]
Suppose $0 < \rho < r$. By Lemma \ref{isoperimetry} and the Cauchy-Schwarz inequality,
\begin{align*}
 \mathscr E(B(p, \rho))  & \lesssim  \biggl ( \int_{\partial B(p,\rho) \setminus E} \frac{2|F'(z)|}{1+|F(z)|^2} \, |dz| \biggr )^2 \\
& \le 2\pi \rho \int_{\partial B(p,\rho) \setminus E} \frac{4 |F'(z)|^2}{(1+|F(z)|^2)^2}  \, |dz|  \\
& \lesssim \rho \cdot \frac{d\mathscr E(B(p,\rho))}{d\rho},
 \end{align*}
provided that $\partial B(p,\rho) \notin \Gamma_0$ is not exceptional.
 Rearranging, we get
 $$
 \frac{d}{d\rho} \log  \mathscr E(B(p, \rho)) \ge \frac{\alpha}{\rho},
 $$
 for some $\alpha > 0$. Since almost every circle $\partial B(p,\rho)$ is not exceptional, we can integrate with respect to $\rho$ to obtain (\ref{eq:decay-of-energy}).
 \end{proof}

\section{Continuity}

As explained in Section \ref{sec:NED}, in order to show that NED sets are removable for spherical Dirichlet functions (Theorem \ref{removability-thm}), it is enough to show:

\begin{lemma}
\label{continuity}
Suppose $\Omega \subset \mathbb{C}$ is a domain in the plane and $E \subset \Omega$ is a compact NED set. Any spherical Dirichlet function $F \in \mathcal F(\Omega \setminus E)$ is continuous as a map from $\Omega$ to the Riemann sphere.
\end{lemma}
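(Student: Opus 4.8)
The plan is to localise at the points of $E$ and reduce to a quantitative diameter bound. Since $E$ is totally disconnected, $\Omega\setminus E$ is a domain; we may assume $F$ is non-constant there, hence an open mapping, for otherwise the extension is trivial. It suffices to show that for every $p\in E$,
$$
\diam_{\sph} F\bigl(B(p,\rho)\setminus E\bigr)\longrightarrow 0 \qquad (\rho\to 0^{+}):
$$
then $\bigcap_{\rho>0}\overline{F(B(p,\rho)\setminus E)}$ is a single point, which we declare to be $F(p)$; carried out at every $p\in E$, and using that $E$ has measure zero, this prescription yields a map $\Omega\to\sph$ that agrees with the given meromorphic function off $E$ and is continuous at each point of $E$.

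Fix $p\in E$. As NED sets have area zero, $\mathscr E$ is absolutely continuous with respect to Lebesgue measure, so $\mathscr E(B(p,\rho))\to 0$; choose $r_0$ with $\overline{B(p,r_0)}\subset\Omega$ and $\mathscr E(B(p,r_0))\le 2\pi$. By Lemma \ref{decay-of-energy} the energy decays geometrically on the dyadic annuli $A_k=\{\,2^{-k-1}r_0<|z-p|<2^{-k}r_0\,\}$: one has $\mathscr E(A_k)\le\mathscr E(B(p,2^{-k}r_0))\le M\,2^{-k\alpha}$ for a constant $M=M(F,p,r_0)$. Since each $A_k$ has a fixed modulus, Lemma \ref{peripheral-curves} provides a non-exceptional separating circle $\eta_k=\partial B(p,\sigma_k)$, $\sigma_k\in(2^{-k-1}r_0,2^{-k}r_0)$, with $L(\eta_k)^2\lesssim 2^{-k\alpha}$, and (applied to the doubled annulus) a non-exceptional radial segment $\delta_k$ joining $\eta_k$ to $\eta_{k+1}$ with $L(\delta_k)^2\lesssim 2^{-k\alpha}$. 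The Ntalampekos perturbation lemma lets us push these off $E$, producing a smooth Jordan curve $\hat\eta_k\subset\Omega\setminus E$ that still separates $p$ from $\partial B(p,r_0)$ and a path $\hat\delta_k\subset\Omega\setminus E$ from $\hat\eta_k$ to $\hat\eta_{k+1}$, with $L(\hat\eta_k),L(\hat\delta_k)\lesssim 2^{-k\alpha/2}$. Consequently the closed curve $\Lambda_k:=F(\hat\eta_k)$ and the arc $F(\hat\delta_k)$ have spherical diameter $\lesssim 2^{-k\alpha/2}$, and the connected set $\Lambda_k\cup F(\hat\delta_k)\cup\Lambda_{k+1}$, which links the two circle images, has spherical diameter $\lesssim 2^{-k\alpha/2}$ as well.

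The heart of the argument is the \emph{neck estimate}: if $\hat R_k$ is the annular region bounded by $\hat\eta_k$ and $\hat\eta_{k+1}$, then $\diam_{\sph}F(\hat R_k\setminus E)\lesssim 2^{-k\alpha/2}$. Granting this, one is done: $\overline{\hat R_k}$ and $\overline{\hat R_{k+1}}$ share the curve $\hat\eta_{k+1}\subset\Omega\setminus E$, so the compacta $F(\overline{\hat R_k}\setminus E)$ form an overlapping chain whose union contains $F(B(p,\rho)\setminus E)$ for $\rho$ sufficiently small, and therefore
$$
\diam_{\sph}F\bigl(B(p,\rho)\setminus E\bigr)\le\sum_{k\ge K}\diam_{\sph}F(\hat R_k\setminus E)\lesssim\sum_{k\ge K}2^{-k\alpha/2}\longrightarrow 0
$$
as $K\to\infty$. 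To prove the neck estimate one cuts $E$ out of the neck: applying Lemma \ref{good-cover} to the compact set $E\cap\overline{\hat R_k}$ — which is again NED, being a closed subset of an NED set — surrounds it by finitely many tiny smooth Jordan curves $\gamma_i\subset\hat R_k$ with disjoint closures and $\sum_iL(\gamma_i)^2,\ \sum_i\mathscr E(\Gamma_i)<\varepsilon$. On $V_k:=\hat R_k\setminus\bigcup_i\overline{\Gamma_i}$ the function $F$ is meromorphic, continuous up to $\partial V_k\subset\Omega\setminus E$, and open; since $\mathscr E(V_k)\le\mathscr E(B(p,2^{-k}r_0))<2\pi$ for $k$ large, Lemma \ref{basic-isoperimetry} gives $\mathscr E(V_k)\lesssim\sum_iL(\gamma_i)^2+L(\hat\eta_k)^2+L(\hat\eta_{k+1})^2\lesssim 2^{-k\alpha}+\varepsilon$. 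It remains to convert this energy bound, together with the small-diameter boundary data $\Lambda_k$, $\Lambda_{k+1}$, $\{F(\gamma_i)\}$ and the linking arc $F(\hat\delta_k)$, into a bound of the same order on $\diam_{\sph}F(V_k)$; letting $\varepsilon\to 0$ and using the density of $F(\hat R_k\setminus E)$ in $F(\overline{\hat R_k}\setminus E)$ then yields the neck estimate.

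That last conversion is the main obstacle: one must exclude the possibility that $F(V_k)$ is a long thin ``tube'' winding around $\sph$ even though its area is small and each boundary curve is short. This can be done via an $\varepsilon$-regularity estimate for meromorphic maps — if $\mathscr A_{\sph}(F,B(z,r))$ is small then $r\cdot\tfrac{2|F'|}{1+|F|^{2}}$ is small on $B(z,r/2)$, a fact proved by Zalcman rescaling together with Lemma \ref{removable-singularities} — which controls the oscillation of $F$ on the part of $V_k$ at a definite distance from $E$, the passage across $E$ being again furnished by the Ntalampekos perturbation lemma; alternatively by a topological argument based on the identity $\diam\overline U=\diam\partial U$ for bounded open connected $U$ and an argument-principle count for $F|_{V_k}$, in which the bookkeeping of the curves $\gamma_i$ must be handled with care. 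Either way, the neck estimate is where the full strength of the NED hypothesis — perturbation of curves off $E$ together with the isoperimetric Lemmas \ref{good-cover} and \ref{basic-isoperimetry} — comes into play, and it is the step on which the whole proof turns.
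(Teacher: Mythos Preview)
Your proposal has a genuine gap: the ``neck estimate'' $\diam_{\sph}F(\hat R_k\setminus E)\lesssim 2^{-k\alpha/2}$ is not proved. You correctly identify this as the crux, but neither of your sketched resolutions is a proof. The $\varepsilon$-regularity route would control $F$ on the part of $V_k$ at definite distance from $E$, but your ``passage across $E$'' via the Ntalampekos lemma is circular: to bound the oscillation of $F$ between two nearby points separated by $E$ you would connect them by a curve in $\Omega\setminus E$ of controlled $L$-length --- and once you can do \emph{that}, the whole neck estimate is superfluous. The topological alternative is even vaguer; the identity $\diam\overline U=\diam\partial U$ holds for bounded open sets in $\mathbb{C}$, but $F(V_k)$ lives in $\sph$ and need not be simply connected, so an argument-principle count is genuinely required and you have not supplied it.

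The paper bypasses the neck estimate entirely with a much simpler idea. Having built the families $\{\hat\gamma_n\},\{\hat\ell_n\}$ around a point $p$ (your $\hat\eta_k,\hat\delta_k$), it builds the analogous families $\{\hat\gamma'_n\},\{\hat\ell'_n\}$ around a \emph{second} point $p'$, where now both $p,p'$ are taken in $\Omega\setminus E$. If $|p-p'|\asymp r/2^m$, then $\hat\gamma_m$ and $\hat\gamma'_m$ must intersect (each encloses the midpoint of $[p,p']$ but neither encloses the other). Concatenating subarcs of $\hat\gamma_n,\hat\ell_n,\hat\gamma'_n,\hat\ell'_n$ for $n\ge m$ produces a single path $\gamma_{p\leftrightarrow p'}\subset\Omega\setminus E$ from $p$ to $p'$ with
\[
L(\gamma_{p\leftrightarrow p'})\ \lesssim\ \sum_{n\ge m}2^{-\alpha n/2}\ \lesssim\ (|p-p'|/r)^{\alpha/2}.
\]
Since this path lies entirely in $\Omega\setminus E$, where $F$ is already continuous, one gets $\dist_{\sph}(F(p),F(p'))\le L(\gamma_{p\leftrightarrow p'})$ immediately. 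This yields H\"older continuity of $F$ on $\Omega\setminus E$, which then extends uniquely to $\Omega$. The missing idea is thus: do not try to control the diameter of the $F$-image of a two-dimensional region (which forces you to rule out thin tubes); instead connect any two test points by a one-dimensional curve that avoids $E$, where no such phenomenon can arise.
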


 In fact, the argument below will show that $F$ is locally H\"older continuous with exponent $\alpha/2$, where $\alpha$ is the constant from Lemma \ref{decay-of-energy}.
 
 \begin{lemma}
 \label{systems-of-curves}
Given a point $p \in \Omega$ such that $B(p,r) \subset \Omega$ and $\mathscr E(B(p,2r)) \le 2\pi$, one can find two families of curves $\{\hat{\gamma}_n\}_{n=0}^\infty$ and $\{\hat{\ell}_n\}_{n=0}^\infty$ in $B(p, r) \setminus E$ such that:
\begin{enumerate}
\item $\hat \gamma_n$ is a Jordan curve contained in the annulus $A (p; 0.9 \, r/2^{n}, r/2^{n} )$, and separates its two boundary components.
\item $\hat \ell_n$ is a Jordan arc which connects $\hat \gamma_n$ and $\hat \gamma_{n+1}$, and except for its endpoints, lies in the topological annulus bounded by these two curves.
\item 
$
L(\hat \gamma_n) \lesssim  2^{- \alpha n /2} 
$
and
$
L(\hat \ell_n) \lesssim 2^{- \alpha n /2}.
$
\end{enumerate}
\end{lemma}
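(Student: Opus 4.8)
The plan is to build both families one dyadic scale at a time, converting the exponential decay of energy supplied by Lemma~\ref{decay-of-energy} into an exponential decay of lengths via the length--area estimate of Lemma~\ref{peripheral-curves}, and then invoking the Ntalampekos perturbation lemma (Lemma~\ref{ntalampekos}) to push the resulting curves off $E$. First I would record the energy decay: since $\mathscr E(B(p,r)) \le \mathscr E(B(p,2r)) \le 2\pi$, Lemma~\ref{decay-of-energy} applied with base radius $r$ gives
$$
\mathscr E(B(p,\rho)) \, \le \, 2\pi\,(\rho/r)^{\alpha}, \qquad 0 < \rho < r .
$$
In particular, every round annulus contained in $B(p,\,r/2^{n})$ has $\mathscr E \lesssim 2^{-\alpha n}$; these are the ``energy reservoirs'' the construction will tap.

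For the separating curves, for each $n \ge 0$ I would apply Lemma~\ref{peripheral-curves} to a thin round annulus lying well inside $A\bigl(p;\, 0.9\,r/2^{n},\, r/2^{n}\bigr)$ --- concentric with $p$, with a fixed collar and hence a modulus independent of $n$ --- to obtain a separating circle $\gamma_{n} = \partial B(p,\rho_{n})$ with $\gamma_{n} \notin \Gamma_{0}$ and $L(\gamma_{n})^{2} \lesssim \mathscr E(B(p,\,r/2^{n})) \lesssim 2^{-\alpha n}$. By Lemma~\ref{ntalampekos} I would replace $\gamma_{n}$ by a smooth Jordan curve $\hat\gamma_{n} \subset \Omega \setminus E$ with $d_{\haus}(\hat\gamma_{n},\gamma_{n}) < \varepsilon_{n}$ and $L(\hat\gamma_{n}) < L(\gamma_{n}) + \varepsilon_{n}$, taking $\varepsilon_{n} > 0$ small enough (depending on $n$) that $\hat\gamma_{n}$ still lies in $A\bigl(p;\, 0.9\,r/2^{n},\, r/2^{n}\bigr)$ and $L(\hat\gamma_{n}) \lesssim 2^{-\alpha n/2}$. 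As $\hat\gamma_{n}$ is a small perturbation of a circle separating the two boundary components of $A(p;\, 0.9\,r/2^{n},\, r/2^{n})$, it separates them as well; and as these annuli are pairwise disjoint, the enclosed Jordan domains $\hat\Gamma_{n}$ satisfy $\overline{\hat\Gamma_{n+1}} \subset \hat\Gamma_{n}$. This establishes (1) and the first half of (3).

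For the connecting arcs I would fix $n$ and observe that, because $\hat\gamma_{n}$ and $\hat\gamma_{n+1}$ are confined to the thin annuli above, each separates the two boundary circles of a round annulus $\mathscr A_{n}$ that contains both $\hat\gamma_{n}$ and $\hat\gamma_{n+1}$, has compact closure in $B(p,r)$, and has modulus bounded away from $0$ and $\infty$; moreover $\mathscr E(\mathscr A_{n}) \le \mathscr E(B(p,\,r/2^{n})) \lesssim 2^{-\alpha n}$. Applying Lemma~\ref{peripheral-curves} to $\mathscr A_{n}$ yields a simple curve $\delta_{n} \notin \Gamma_{0}$ (in fact a radial segment) joining the two boundary circles of $\mathscr A_{n}$ with $L(\delta_{n})^{2} \lesssim 2^{-\alpha n}$, and Lemma~\ref{ntalampekos} perturbs $\delta_{n}$, keeping its endpoints, to a path $\hat\delta_{n} \subset B(p,r)$ that avoids $E$ away from its two endpoints and has $L(\hat\delta_{n}) \le L(\delta_{n}) + \varepsilon_{n}$. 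The two endpoints of $\hat\delta_{n}$ lie on $\partial\mathscr A_{n}$: one is contained in $\hat\Gamma_{n+1}$ and the other lies outside $\overline{\hat\Gamma_{n}}$, so $\hat\delta_{n}$ runs from the inside of $\hat\gamma_{n+1}$ to the outside of $\hat\gamma_{n}$. Letting $a$ be the last parameter at which $\hat\delta_{n}$ lies in $\overline{\hat\Gamma_{n+1}}$ and $b > a$ the first parameter after $a$ at which it leaves $\hat\Gamma_{n}$, one has $\hat\delta_{n}(a) \in \hat\gamma_{n+1}$, $\hat\delta_{n}(b) \in \hat\gamma_{n}$, $0 < a < b < 1$, and $\hat\delta_{n}$ maps $(a,b)$ into the topological annulus $T_{n}$ between $\hat\gamma_{n}$ and $\hat\gamma_{n+1}$. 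Extracting a Jordan sub-arc $\hat\ell_{n}$ of the portion of $\hat\delta_{n}$ between the parameters $a$ and $b$, with endpoints $\hat\delta_{n}(a)$ and $\hat\delta_{n}(b)$, then produces a Jordan arc connecting $\hat\gamma_{n+1}$ and $\hat\gamma_{n}$, with interior in $T_{n}$, disjoint from $E$ (since it never meets the two possibly-bad endpoints of $\hat\delta_{n}$), and with $L(\hat\ell_{n}) \le L(\hat\delta_{n}) \lesssim 2^{-\alpha n/2}$. This gives (2) and the remaining half of (3).

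The step I expect to be the main obstacle is the topological bookkeeping in the last part: having perturbed the separating circles and the radial segments independently, one must still verify that $\hat\delta_{n}$ runs from the bounded face of $\hat\gamma_{n+1}$ to the exterior of $\hat\gamma_{n}$ and that the extracted sub-arc never re-enters $\overline{\hat\Gamma_{n+1}}$ or leaves $\hat\Gamma_{n}$. The mechanism is to place each $\gamma_{n}$ with a definite collar inside its annulus, so that for small $\varepsilon_{n}$ the curve $\hat\gamma_{n}$ lies in a thin sub-annulus of $A(p;\, 0.9\,r/2^{n},\, r/2^{n})$; then $\hat\Gamma_{n}$ both contains a fixed round disk about $p$ and is contained in a slightly larger one, and from this the separation assertions, the nesting $\overline{\hat\Gamma_{n+1}} \subset \hat\Gamma_{n}$, and the placement of the endpoints of $\hat\delta_{n}$ on the correct sides of $\hat\gamma_{n}$ and $\hat\gamma_{n+1}$ all follow by elementary plane topology. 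Apart from this, the proof is a direct assembly of Lemmas~\ref{decay-of-energy}, \ref{peripheral-curves}, and~\ref{ntalampekos}.
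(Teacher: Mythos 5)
Your proposal is correct and follows essentially the same route as the paper: apply Lemma \ref{decay-of-energy} to get exponential energy decay on the balls $B(p,r/2^n)$, convert this to short separating circles and short radial segments via Lemma \ref{peripheral-curves}, push these off $E$ by Lemma \ref{ntalampekos}, and extract a sub-arc of the perturbed radial segment to connect consecutive curves. The extra attention you give to collars and the exact placement of endpoints is implicit in the paper's argument (the Ntalampekos perturbation can be made arbitrarily small) but is a sound and welcome elaboration rather than a different approach.
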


\begin{proof}
By Lemmas \ref{peripheral-curves} and \ref{decay-of-energy}, there exists a sequence of nested concentric circles $\gamma_n = \partial B(p, r_n)$, $n = 0, 1, 2, \dots$, with
 $0.9 \, r/2^{n} < r_n < r/2^{n}$ such that
$$
L(\gamma_n) \, \lesssim \, \sqrt{\mathscr E(B(p,r/2^{n-1}))} \, \lesssim \,  2^{- \alpha n /2},
$$
as well as a sequence of line segments 
$$\ell_n = \bigl [p+ (0.9 \, r/2^{n+1}) e^{i\theta_n},\, p+ (r/2^{n}) e^{i\theta_n} \bigr ]$$ with
$$
L(\ell_n) \, \lesssim \, \sqrt{\mathscr E(B(p,r/2^{n-1}))} \, \lesssim \,  2^{- \alpha n /2}.
$$
We first apply the Ntalampekos perturbation lemma to the circles $\gamma_n$ to produce the desired Jordan curves $\hat \gamma_n \in 
A  (p; 0.9 \, r/2^{n}, r/2^{n} ) \setminus E$. We then apply the
 Ntalampekos perturbation lemma to the line segments $\ell_n$ to obtain Jordan arcs $\tilde \ell_n \in \Omega \setminus E$ with the same endpoints, and obeying the same estimate. Since
 the arc $\tilde \ell_n$ runs from $\partial B (p, r/2^{n})$ to $\partial B  (p, 0.9 \, r/2^{n+1}  )$, a sub-arc $\hat{\ell}_n$ connects $\hat \gamma_n$ and $\hat \gamma_{n+1}$.
\end{proof}

\begin{proof}[Proof of Lemma \ref{continuity}]
Fix a ball $B(q,r/2) \subset \Omega$ with $\mathscr E \bigl (B(q,5 r/2) \bigr) \le 2\pi$. Suppose $p, p'$ are two points in $B(q,r/2) \setminus E$. In order to estimate the spherical distance between  $F(p)$ and $F(p')$, we connect $F(p), F(p')$ by a curve. To that end, we use  Lemma  \ref{systems-of-curves} to form four families of curves $\{\hat{\gamma}_n\}_{n=0}^\infty$,  $\{\hat{\ell}_n\}_{n=0}^\infty$,  $\{\hat{\gamma}_n'\}_{n=0}^\infty$ and $\{\hat{\ell}_n'\}_{n=0}^\infty$ associated to the pairs $(p, r)$ and $(p', r)$.

Since the Euclidean distance $s = |p-p'|$ between $p$ and $p'$ is less than $r$, there is a unique integer $m \ge 0$ so that
$$
\frac{r}{2^{m+1}} \le |p - p'| < \frac{r}{2^{m}}.
$$
It is easy to see the curves 
$$
\hat \gamma_m \subset A(p; 0.9\, r/2^{m}, r/2^{m}) \qquad \text{and} \qquad
 \hat \gamma'_m \subset A(p'; 0.9\, r/2^{m}, r/2^{m})$$ intersect: they both enclose the midpoint of the line segment $[p, p']$ but neither curve encloses the other.
  Pick an arbitrary point $p'' \in \hat \gamma_m \cap \hat \gamma'_m$ in the intersection. Concatenating pieces of curves from these four families (with indices $\ge m$) produces a curve 
$\gamma_{p \leftrightarrow p'} \subset \Omega$ which joins  $p$ to $p'$ and passes through $p''$  with $L(\gamma_{p \leftrightarrow p'}) \lesssim 
(s/r)^{\alpha/2}$. The proof is complete.
\end{proof}

\section*{Acknowledgements}
The author wishes to thank Dimitrios Ntalampekos, Anand Patel, Oliver Roth and Mikhail Sodin for many interesting conversations. The author is very grateful to the anonymous referee for many helpful suggestions that helped improved the readability of this paper. This research was supported by the Israeli Science Foundation (grant no.~3134/21).

\bibliographystyle{amsplain}

\end{document}